\newcommand{\citelink}[2]{\hyperlink{cite.\therefsection @#1}{#2}}
\renewcommand{\mathbb}{\mathds}
\newcommand{\R}{\mathbb R}
\newcommand{\T}{\mathbb T}
\newcommand{\D}{\mathcal D}
\newcommand{\E}{\mathcal E}
\newcommand{\vect}[1]{\bm{#1}}
\newcommand{\x}{\vect{x}}
\newcommand{\cco}{\llbracket}
\newcommand{\ccf}{\rrbracket}
\newcommand{\dd}{\mathop{}\!\mathrm{d}}
\DeclareMathOperator{\Law}{Law}
\DeclareMathOperator{\Expect}{\mathbb{E}}
\newcommand{\1}{\mathbb{1}}
\newcommand{\colonbin}{\mathbin{:}}
\newcommand{\CJW}{C_\textnormal{JW}}
\newtheorem{thm}{Theorem}
\newtheorem{lem}[thm]{Lemma}
\newtheorem{cor}[thm]{Corollary}
\newtheorem{prop}[thm]{Proposition}
\theoremstyle{definition}
\newtheorem*{assu*}{Main assumption}
\theoremstyle{remark}
\newtheorem{rem}{Remark}
\title{Sharp local propagation of chaos\\for mean field particles
with $W^{-1,\infty}$ kernels}
\author{Songbo Wang}
\date{}
\subjclass{Primary 82C22; Secondary 60F17, 35Q35}
\keywords{mean field limit, propagation of chaos, singular kernel,
BBGKY hierarchy}
\begin{document}

\normalsizeAffilfont
\maketitle

\begin{abstract}
We study a system of $N$ diffusive particles with
$W^{-1,\infty}$ mean field interaction and establish
$O(1/N^2)$ local propagation of chaos estimates as
$N \to \infty$, measured in relative entropy
and in weighted $L^2$ distance. These results extend
the work of Lacker
[\citelink{LackerHier}{\textit{Probab.\ Math.\ Phys.},
4(2):377--432, 2023}] to singular interactions. The
entropy bound follows from a hierarchy of relative
entropies and Fisher informations, and applies to the
2D viscous vortex model in the weak interaction regime
regime, yielding a uniform-in-time estimate. The
$L^2$ bound is obtained through a hierarchy of
$\chi^2$ divergences and Dirichlet energies, leading
to sharp short-time estimates for the same model
without constraints on the interaction strength.
\end{abstract}

\section{Introduction and main results}
\label{sec:intro}

In this work, we are interested in the following system
of $N \geqslant 2$ interacting particles
on the $d$-dimensional torus $\T^d = (\R / \mathbb Z)^d$:
\begin{equation}
\label{eq:ps}
\dd X^i_t = \frac 1{N-1} \sum_{j \in [N] : j \neq i}
K\bigl(X^i_t - X^j_t\bigr) \dd t + \sqrt 2 \dd W^i_t,
\qquad\text{for $i \in [N]$},
\end{equation}
where $K$ is a singular interaction force kernel,
$W^i_\cdot$ are independent Brownian motions.
and $[N] \coloneqq \cco 1, N\ccf = \{1, \ldots, N\}$.
To be precise, we will consider force kernels admitting the decomposition
$K = K_1 + K_2$
such that $K_1$ is divergence-free and belongs to
the homogeneous space $\dot W^{-1,\infty} (\T^d; \R^d)$,
in the sense that
$K_{1,\alpha} = \sum_{\beta=1}^d \partial_{\beta} V_{\beta \alpha}$
for some matrix field $V \in L^\infty(\T^d ;\R^{d\times d})$,
and $K_2 \in L^\infty (\T^d; \R^d)$.
For simplicity we write $W^{-1,\infty} = \dot W^{-1,\infty}$ in the following.
We then write the particle system's formal mean field limit when $N \to \infty$:
\begin{equation}
\label{eq:mf}
\dd X_t = (K\star m_t) \dd t + \sqrt 2 \dd W_t,
\qquad m_t = \Law(X_t),
\end{equation}
and wish to show that the system \eqref{eq:ps} converges to \eqref{eq:mf}
when $N \to \infty$ in an appropriate sense.

The main example of the system in singular interaction
is the \emph{2D viscous vortex model},
where $d = 2$
and $K$ is a periodic version of the following force kernel defined on $\R^2$:
\[
K' (x) = \frac{1}{2\pi} \frac{x^\perp}{\lvert x\rvert^2}
= \frac{1}{2\pi} \biggl( - \frac{x_2}{\lvert x\rvert^2},
\frac{x_1}{\lvert x\rvert^2} \biggr)^{\!\top},\qquad
x = (x_1,x_2)^\top.
\]
Notice that we have $K' = \nabla \cdot V'$ for
\[
V' (x) = \frac{1}{2\pi}
\begin{pmatrix}
- \arctan (x_2/x_1) & 0 \\
0 & \arctan (x_1/x_2)
\end{pmatrix}.
\]
The model originates from the studies
of 2D incompressible Navier--Stokes equations
and we refer readers to the work of Jabin and Z.~Wang \cite{JabinWang}
and the expository article \cite{SaintRaymondVortex}
(and references therein) for details.

Throughout the paper, we suppose that
the $N$ particles in the dynamics \eqref{eq:ps} are exchangeable,
that is, for all permutation $\sigma$ of the index set $[N]$, we have
$\Law\bigl(X^1_t, \ldots, X^N_t\bigr)
= \Law\bigl(X^{\sigma(1)}_t, \ldots, X^{\sigma(N)}_t\bigr)$,
and denote $m^{N,k}_t = \Law\bigl(X^1_t,\ldots,X^k_t\bigr)$.
The aim of this paper is then to investigate quantitatively the convergence
$m^{N,k}_t \to m_t^{\otimes k}$
when $N \to \infty$ and $k$ remains fixed.
This corresponds to the quantitative
\emph{propagation of chaos} (PoC) phenomenon in the sense of Kac; see
\textcite{HaurayMischlerKac} for details.
To measure the difference between probability measures,
we use the relative entropy
\begin{align*}
H (m_1 | m_2)
&= \int \log \frac{m_1(x)}{m_2(x)} m_1(\dd x)
\intertext{and the $\chi^2$ divergence}
D (m_1 | m_2)
&= \int \biggl( \frac{m_1(x)}{m_2(x)} - 1 \biggr)^{\!2} m_2(\dd x)
\end{align*}
The relative entropy acts as a weighted $L\log L$ norm of the relative
density $m_1/m_2$, while the $\chi^2$ divergence corresponds to a
weighted $L^2$ norm of the same density. Moreover, the latter controls
the former via an interpolation-type argument. For convenience, we
sometimes call the $\chi^2$ divergence the $L^2$ distance, whenever
this causes no ambiguity.
In both of the two equations above, we have identified
the probability laws $m_1$, $m_2$
with their density functions
(with respect to the appropriate Lebesgue measure).
The results of this paper are thus upper bounds on
\[
H^k_t = H\bigl( m^{N,k}_t \big| m_t^{\otimes k} \bigr),~
D^k_t = D\bigl( m^{N,k}_t \big| m_t^{\otimes k} \bigr)
\]
that are diminishing when $N \to \infty$.
In the case of diffusion processes, the two crucial quantities
\begin{align*}
I (m_1 | m_2)
&= \int \biggl| \nabla \log \frac{m_1(x)}{m_2(x)} \biggr|^2 m_1(\dd x), \\
E (m_1 | m_2)
&= \int \biggl| \nabla \frac{m_1(x)}{m_2(x)} \biggr|^2 m_2(\dd x),
\end{align*}
called respectively (relative) Fisher information and Dirichlet energy,
also appear when we study the time-evolution of the relative entropy
and the $L^2$ distance.
In fact, the inclusion of these quantities in the analysis
is the main novelty of this work.

Recently, the propagation of chaos phenomenon of singular mean field dynamics
has raised high interests.
The main approach to handle singular interactions
is to construct suitable weak-strong stability functionals
that compare the $N$-particle and mean field marginal flows.
The $N$-particle marginal flow
\[t \mapsto m^{N}_t \coloneqq m^{N,N}_t \coloneqq
\Law\bigl( X^1_t, \ldots, X^N_t \bigr)\]
satisfies the \emph{Liouville},
\emph{Fokker--Planck} or \emph{forward Kolmogorov equation}
\begin{equation}
\label{eq:ps-fp}
\partial_t m^N_t
= \sum_{i \in [N]} \Delta_i m^N_t
- \frac 1{N-1} \sum_{i,j \in [N]: i\neq j}
\nabla_i \cdot \bigl( m^N_t K(x^i - x^j) \bigr).
\end{equation}
Notice that the $N$-tensorization $m_t^{\otimes N}$
of the mean field system \eqref{eq:mf} solves
\begin{equation}
\label{eq:mf-fp}
\partial_t m^{\otimes N}_t
= \sum_{i \in [N]} \Delta_i m^{\otimes N}_t
- \sum_{i \in [N]}
\nabla_i \cdot \bigl( m^{\otimes N}_t (K\star m_t)(x^i) \bigr).
\end{equation}
For $W^{-1,\infty}$ force kernels with bounded divergences
under possibly vanishing diffisivity,
Jabin and Z.~Wang \cite{JabinWang} showed that the relative entropy functional
suffices for the weak-strong stability,
yielding global PoC estimates that grows exponentially in time.%
\footnote{This work will be referred as ``Jabin--Wang''
in the following of this paper
without including the name initial of the second author.}
For deterministic dynamics
with repulsive or conservative Coulomb and Riesz interactions,
Serfaty constructed the modulated energy in \cite{SerfatyME}
and derived global-in-time PoC.
Then, Bresch, Jabin and Z.~Wang \cite{BJWMFE,BJWAttractive}
extended the method of Serfaty
to diffusive (and possibly attractive) Coulomb and Riesz systems
and showed the global-in-time PoC
by marrying relative entropy with modulated energy,
the new functional being called modulated free energy.
We mention here also another work \cite{CdCRSAttractive}
on the attractive case with logarithmic potentials.
More recently, by analyzing the decay of the mean-field limit and
exploiting dissipation through functional inequalities,
\textcite{GLBMVortex} and \textcite{CdCRSUniform} obtained
uniform-in-time PoC estimates for the 2D viscous vortex model and for
diffusive Coulomb flows, respectively. Extensions to the whole space
were carried out in \cite{FWVortex,lsiut,RSWhole1}.

The main result of \cite{JabinWang} applied to our dynamics
\eqref{eq:ps}, \eqref{eq:mf} already indicates
\begin{align*}
H \bigl( m^N_t \big| m_t^{\otimes N} \bigr) &\leqslant C e^{Ct}
\intertext{for some $C \geqslant 0$,
if the initial distance is zero: $m^N_0 = m_0^{\otimes N}$.
Then by the super-additivity of relative entropy, we get}
H \bigl( m^{N,k}_t \big| m_t^{\otimes k} \bigr)
&\leqslant \frac{C e^{Ct}}{\lfloor N/k \rfloor},
\end{align*}
and this is already a quantitative PoC estimate.
However, the findings of Lacker in \cite{LackerHier} reveal that
the $O(k/N)$-order bound obtained above is sub-optimal
for regular interactions (where $K$ is e.g.\ bounded),
and the sharp order in this case is $O(k^2\!/N^2)$.
The method of Lacker is to consider the BBGKY hierarchy
of the marginal distrbutions $(m^{N,k}_t)_{k \in [N]}$,
where the evolution of $m^{N,k}_t$ depends on itself
and the higher-level marginal $m^{N,k+1}_t$, namely
\begin{equation}
\label{eq:BBGKY}
\begin{aligned}
\partial_t m^{N,k}_t
&= \sum_{i\in [k]}\Delta_i m^{N,k}_t
- \frac 1{N-1}\sum_{i,j\in [k] : i\neq j}
\nabla_i \cdot \bigl(m^{N,k}_t K(x^i - x^j)\bigr) \\
&\mathrel{\hphantom{=}} \quad \negmedspace {}
- \frac {N-k}{N-1} \sum_{i \in [k]}
\nabla_i \cdot \biggl( \int_{\T^d} K(x^i - x_*) m^{N,k+1}_t
(\x^{[k]}, x^*) \dd x^* \biggr),
\end{aligned}
\end{equation}
and then to calculate the evolution
of $H^k_t = H \bigl( m_t^{N,k} \big| m_t^{\otimes k} \bigr)$,
which yields a hierarchy of ODE where
$\dd H^k_t /\! \dd t$ depends on $H^k_t$ and $H^{k+1}_t$.
Solving this ODE system allows for the sharp $O(k^2\!/N^2)$ bounds on $H^k_t$.
This method of Lacker is \emph{local} in the sense that
the quantity of interest describes the behavior of a fixed number of particles
even when $N \to \infty$, and stand in contrast with
the \emph{global} approaches mentioned in the paragraph above,
where the $N$-particle joint law is instead considered.
Then, in collaboration with Le Flem, Lacker \cite{LLFSharp} strengthened this
result by proving a uniform-in-time $O(k^2\!/N^2)$ rate in a weak interaction
regime, relying on log-Sobolev inequalities to exploit heat dissipation.
Very recently, \textcite{HCRHigher}
extended this hierarchical method to the $L^2$ distance
and obtained sharp convergence rates for higher-order expansions
in the case of bounded force kernels
(the convergence of $m^{N,k}_t$ to the tensorized law $m_t^{\otimes k}$
being merely zeroth-order).
\textcite{XieUniformTimeSizeChaos} considered the same framework
but adopted a different approach, obtaining uniform-in-time estimates for
cumulant functions of arbitrary order.

The entropy and $L^2$ methods require non-zero diffusivity in the dynamics
to yield sharp chaos estimates,
thus excluding deterministic Vlasov dynamics considered in the recent work
of Duerinckx \cite{DuerinckxChaos}.
Still, these methods enable two improvements.
First, the norm-distance between $m^{N,k}_t$ and $m_t^{\otimes k}$
(which scales as the square root of relative entropy)
can be shown to be of order $O(k/N)$,
while directly applying the correlation bounds in \cite{DuerinckxChaos}
gives only an $O(k^2\!/N)$-order control.
Note that this is also the order obtained in \cite{PPSChaos}
for dynamics with collision terms.
Second, the entropy and $L^2$ methods fully exploit the Laplace operator
to prevent the loss of derivatives in the BBGKY hierarchy
and establish chaos bounds in stronger norms
than those in \cite{DuerinckxChaos}.

Finally, we note that Bresch, Jabin and coauthors
have also applied hierarchical methods
to study second-order dynamics of singular interaction
in recent works \cite{BJSNewApproach,BDJDuality},
and have shown respectively short-time strong PoC and global-in-time weak PoC
under different regularity assumptions.
This is significant progress,
as the previous best PoC results for second-order systems,
to the knowledge of the author, apply only to mildly singular force kernels
satisfying $K(x) = O(\lvert x\rvert^{-\alpha})$ for $\alpha < 1$.
\textcite{BLBJSLongtime} further address the short-time limitation
in this method by exploiting the heat dissipation in first-order dynamics.

In this work,
we extend the entropic hierarchy of Lacker
and the $L^2$ hierarchy of Hess-Childs--Rowan (only in the zeroth-order)
to the case of $W^{-1,\infty}$ interactions.
In the new hierarchies of ODE,
which describe the evolution of $H^k_t$ and $D^k_t$ respectively,
Fisher information and Dirichlet energy of the next level appear,
and we develop new methods to solve the ODE systems.
In the first entropic case, we show that
$H^k_t = O(k^2\!/N^2)$ globally in time,
if the interaction strength is weak enough
(or equivalently, upon a rescaling of time, the interaction is weak enough).
Moreover, in the case of 2D vortex model,
we show that and $H^k_t = O(k^2e^{-rt}\!/N^2)$ for some $r > 0$,
thanks to the exponential decay established in \cite{GLBMVortex,CdCRSUniform}.
We also provide a simple way to solve Lacker's ODE system,
based on a comparison principle.
In the second $L^2$ case, we remove the restriction on the interaction strength
by working with $L^2$ distances $D^k_t$
and show that $D^k_t = O(1/N^2)$ for $k = O(1)$
but only in a short time interval.

\bigskip

We state the main results and discuss them in the rest of this section,
and give their proof in Section~\ref{sec:proof}.
The studies of the ODE hierarchies, which are the final steps of the proof
and the main technical contributions of this work,
are postponed to Section~\ref{sec:hier}.
We present some other technical results in Section~\ref{sec:toolbox}.

\subsection*{Main results}

Throughout the paper, we will work with a solution
of the Liouville equation \eqref{eq:ps-fp}, denoted $m^N_t$, for which
we can find a sequence of force kernels
$K^\varepsilon \in \mathcal C^\infty(\T^d)$
and probability densities $m^{N,\varepsilon}_t \in \mathcal C^\infty(\T^d)$
such that they satisfy \eqref{eq:ps-fp} when
$K$, $m^N_t$ are respectively replaced
by $K^\varepsilon$, $m^{N,\varepsilon}_t$;
that $K^\varepsilon \to K$ almost everywhere
and $m^{N,\varepsilon}_t \to m^N_t$ weakly as probability measures;
and finally that $m^{N,\varepsilon}_t$ is bounded below by a positive constant.
We suppose also that the mean field flow $m_t$
is the weak limit of $\mathcal C^\infty$ approximations $m^{\varepsilon}_t$
that correspond to the McKean--Vlasov SDE \eqref{eq:mf} driven by
the regularized force kernel $K^\varepsilon$,
and that each $m^\varepsilon_t$ has also strictly positive density.
In particular, the 2D viscous vortex model verifies this assumption.
See e.g.\ \cite{lsiut} for details.
(Although the setting there is on $\R^d$ instead of
$\T^d$ but the argument is the same.)
We impose this technical assumption in order to avoid subtle
well-posedness issues in the singular PDE \eqref{eq:ps-fp}.
\textcite{JabinWang} considers entropy solutions, but it is not clear
to the author whether this notion is equivalent
to the regularized one adopted here.

We present the main assumption of this paper concerning the regularity
of the force kernel.

\begin{assu*}
The interaction force kernel admits the decomposition
$K = K_1 + K_2$, where $K_1 = \nabla \cdot V$
for some $V \in L^\infty(\T^d; \R^d \times \R^d)$
and satisfies $\nabla \cdot K_1 = 0$, and $K_2 \in L^\infty$.
\end{assu*}

We then state our main results.

\begin{thm}[Entropic PoC]
\label{thm:entropy}
Let the main assumption hold.
Suppose that the marginal relative entropies at the initial time satisfy
\[
H^k_0 \leqslant C_0 \frac{k^2}{N^2}
\]
for all $k \in [N]$, for some $C_0 \geqslant 0$.
If $\lVert V\rVert_{L^{\infty}} < 1$,
then for all $T > 0$, there exists $M$,
depending on
\[
C_0,~\lVert V\rVert_{L^{\infty}},~
\lVert K_2\rVert_{L^\infty},~
\sup_{t \in [0,T]} \lVert \nabla \log m_t\rVert_{L^\infty}^2
\!+ \lVert \nabla^2 \log m_t\rVert_{L^\infty},
\]
such that for all $t \in [0,T]$,
\[
H^k_t \leqslant M e^{Mt}\frac{k^2}{N^2}.
\]
If additionally $K_2 = 0$ and
\[
\lVert \nabla \log m_t \rVert_{L^\infty}^2
+ \lVert \nabla^2 \log m_t \rVert_{L^\infty}
\leqslant M_m e^{ - \eta t}
\]
for all $t \geqslant 0$, for some $M_m \geqslant 0$ and $\eta > 0$, then
for all $r$ such that
$0 < r < r_*
\coloneqq \min\bigl( \eta, (1 - \lVert V\rVert_{L^\infty}) 8 \pi^2 \bigr)$,
there exists $M'$, depending on
\[
C_0,~\lVert V\rVert_{L^\infty},~M_m,~\eta,~r,~d,
\]
such that for all $t \geqslant 0$, we have
\[
H^k_t \leqslant M' e^{- rt} \frac{k^2}{N^2}.
\]
\end{thm}

\begin{rem}
In the case $K_2 = 0$ of Theorem~\ref{thm:entropy},
choosing $m_t$ as the uniform distribution on $\T^d$,
yields an exponential rate of local convergence of the particle system
towards its stationary state.
\end{rem}

\begin{thm}[$L^2$ PoC]
\label{thm:l2}
Let the main assumption hold.
Suppose that the marginal $L^2$ distances at the initial time satisfy
\[
\sum_{k=1}^N r^k D^k_0 \leqslant \frac{C_0}{N^2(1-r)^3}
\]
for all $k \in [N]$ and $r \in [0,1)$, for some $C_0 \geqslant 0$.
Let $T > 0$ be arbitrary.
If the matrix field $V$ satisfies
\[
M_V \coloneqq \sup_{t\in [0,T]}
\sup_{x \in \T^d}\int_{\T^d} \lvert V(x-y)\rvert^2 m_t(\dd y) < 1,
\]
then there exists $T_* > 0$,
depending on
\[
\lVert V\rVert_{L^{\infty}},~M_V,~
\lVert K_2\rVert_{L^\infty},~
\sup_{t \in [0,T]} \lVert \nabla \log m_t\rVert_{L^\infty}^2\!+
\lVert \nabla^2 \log m_t\rVert_{L^\infty},
\]
such that for all $t \in [0,T_* \wedge T)$, we have
\[
D^k_t \leqslant \frac{Me^{Mk}}{(T_*-t)^3N^2}.
\]
for some $M$ depending additionally on $C_0$.
\end{thm}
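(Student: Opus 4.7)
The plan is to derive a hierarchy of differential inequalities coupling $D^k_t$ with the Dirichlet energies $E^{k+1}_t$ at consecutive levels, and then solve this hierarchy via a generating function whose associated Riccati-type equation enforces the finite-time horizon $T_*$. First, starting from the BBGKY equation \eqref{eq:BBKGY} together with the tensorized mean-field equation (which is \eqref{eq:mf-fp} restricted to the first $k$ coordinates), I would form the ratio $u^k_t \coloneqq m^{N,k}_t / m_t^{\otimes k}$ and differentiate $D^k_t = \int(u^k_t-1)^2\,m_t^{\otimes k}$. After integration by parts, the diffusion terms produce a dissipation $-2E^k_t$, the intra-cluster interactions $(i,j\in[k])$ contribute $O(k^2/N)$ lower-order terms, and the only non-trivial drift piece is the coupling between $m^{N,k+1}_t$ and $m_t^{\otimes k}(K\star m_t)$.

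Second, the singular kernel $K_1 = \nabla\cdot V$ is handled by integrating by parts to transfer derivatives onto $u^{k+1}_t$, $u^k_t$, or $\log m_t$. After re-symmetrization, the inter-level coupling reduces to schematic expressions of the form
\[
\sum_{i\in[k]} \int V(x^i-x^*) \mathbin{:} \bigl(\nabla_{x^*} u^{k+1}_t\bigr) \bigl(\nabla_i u^k_t\bigr)\, m_t^{\otimes(k+1)}\,\dd\x^{[k]}\,\dd x^*,
\]
plus lower-order terms involving $\nabla\log m_t$. Applying Cauchy--Schwarz weighted against $m_t^{\otimes(k+1)}$ and using the hypothesis $M_V<1$, which precisely controls the $L^2(m_t)$-norm of $V(x-\cdot\,)$ uniformly in $x$, I would bound these by a multiple of $\sqrt{M_V\,E^{k+1}_t\,E^k_t}$. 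The bounded perturbation $K_2$ is handled in a similar but easier way via $\lVert K_2\rVert_{L^\infty}$, producing cross terms of type $\sqrt{D^{k+1}_t\,E^k_t}$.

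Third, Young's inequality with constants tuned so that the coefficient of $E^k_t$ on the right-hand side is strictly less than the dissipation coefficient (possible exactly because $M_V<1$) yields a closed hierarchy of the schematic form
\[
\tfrac{\dd}{\dd t}D^k_t + c_0\,E^k_t \leqslant A_1\, k\, D^k_t + A_2\, \tfrac{k^2}{N^2} + A_3\, k\,\sqrt{D^{k+1}_t\,E^{k+1}_t},
\]
with explicit constants depending only on the data listed in the theorem. To close it, I would introduce a weighted generating function such as $F(t,\lambda)=\sum_{k\geqslant 1} e^{-\lambda k} D^k_t$ and a companion $G(t,\lambda)$ for $E^k_t$, and derive a scalar differential inequality in $(t,\lambda)$. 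The nonlinear coupling $\sqrt{D^{k+1}_t\,E^{k+1}_t}$, after squaring and optimizing in $\lambda$, produces a Riccati-type ODE whose solution blows up at a finite time $T_*$; the $(T_*-t)^{-3}$ and $e^{Mk}$ factors both emerge from this blow-up profile.

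The main obstacle is that the dissipation $-c_0\,E^k_t$ at level $k$ does not directly cancel the cross term $\sqrt{D^{k+1}_t\,E^{k+1}_t}$, which lives at level $k+1$; any attempt to reuse $E^{k+1}_t$ as dissipation at the next level must be coordinated through the generating function, and the trade-off is what forces a Riccati structure rather than a linear one. Unlike the entropic case (Theorem~\ref{thm:entropy}), where log-Sobolev-type inequalities convert Fisher information into relative entropy and allow a global-in-time closure, no analogous inequality lets us close the $L^2$ hierarchy in this way; this is precisely why the bound holds only up to a short time $T_*$, even when $M_V$ is taken arbitrarily small.
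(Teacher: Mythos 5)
Your derivation of the differential hierarchy (the $p=2$ computation with the BBGKY equation, integration by parts on $K_1=\nabla\cdot V$, and Cauchy--Schwarz against $m_t$ using $M_V<1$) follows the paper's route in spirit, but the closed hierarchy you write down is not what your own estimates produce, and the term you drop is the crux. Young's inequality applied to the singular outer coupling --- schematically $k\sqrt{M_V\,E^k_t\,E^{k+1}_t}$, or in the paper's treatment the bound on $B_1$ via Proposition~\ref{prop:transport} and the towering identities --- necessarily leaves a \emph{linear} term $c_2\,E^{k+1}_t$ on the right-hand side, with $c_2<c_1$ exactly because $M_V<1$. This next-level Dirichlet term cannot be discarded: it is the whole difficulty of the $L^2$ hierarchy, it dictates which geometric weights $r^k$ (only $r\geqslant c_2/c_1$) can be used to telescope the energies, and it is the source of the $e^{Mk}$ loss in the final bound. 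Your displayed hierarchy contains instead only $A_3\,k\sqrt{D^{k+1}_t E^{k+1}_t}$, which matches neither your step 2 (the $K_2$ part gives $\sqrt{D^{k+1}_t E^k_t}$, the $K_1$ part $\sqrt{E^k_t E^{k+1}_t}$) nor the structure needed later. (A smaller gloss: the inner singular terms are not automatically $O(k^2/N)$; they require the integration by parts plus the variance/concentration estimates of Lemmas~\ref{lem:internal-interaction-bounded} and~\ref{lem:internal-interaction-bounded-ibp}, although in the $L^2$ case these are elementary and yield terms absorbable into $kD^{k+1}_t$.)

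More importantly, the proposed resolution of the hierarchy via a Riccati-type blow-up is not a valid argument and is not the mechanism behind the statement. Once the cross terms are split by Young, the hierarchy is \emph{linear}: $\dd D^k_t/\!\dd t \leqslant -c_1E^k_t + c_2E^{k+1}_t + M_2kD^{k+1}_t + M_3k^2/N^2$. Summing against weights $r^k$ with $r\geqslant c_2/c_1$ makes all Dirichlet energies cancel (since $c_2r^k - c_1r^{k+1}\leqslant 0$) and yields the linear transport inequality $\partial_t F(t,r) \leqslant M_2\,\partial_r F(t,r) + 2M_3/(N^2(1-r)^3)$ for $F(t,r)=\sum_k r^k D^k_t$, which is solved along the characteristic $r_s = c_2/c_1 + M_2(t-s)$; this is Proposition~\ref{prop:l2-hier}. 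The finite horizon $T_* = M_2^{-1}(1-c_2/c_1)$ is the time at which the characteristic leaves the admissible window $[c_2/c_1,1)$, the factor $(T_*-t)^{-3}$ comes from evaluating the $(1-r_0)^{-3}$ initial datum and forcing with $1-r_0 = M_2(T_*-t)$, and $e^{Mk}$ comes from $D^k_t \leqslant r_t^{-k}F(t,r_t)$ with $r_t = c_2/c_1 < 1$; there is no nonlinear blow-up of the quantities themselves. If instead you keep the square-root coupling and ``square and optimize in $\lambda$'', Cauchy--Schwarz over $k$ and absorption of the energy factor into the dissipation produce a term involving $\sum_k k^2 e^{-\lambda k}D^{k+1}_t$, i.e.\ second $\lambda$-derivatives of $F$ --- still linear in the $D^k_t$ --- not a term $F^2$, so no Riccati equation closes and the claimed blow-up profile is unsupported. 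Your closing heuristic is also off for the entropic theorem: the global closure there is obtained by a telescoping weighted sum of entropies that cancels the Fisher informations (Proposition~\ref{prop:ent-hier}), not by a log-Sobolev inequality.
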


\begin{rem}
The $e^{Mk}$ dependency on $k$ in Theorem~\ref{thm:l2}
is highly suboptimal and appears to be a proof artifact,
at least for $k = o(N)$.
We do not pursue this refinement since it does not resolve
the more significant short-time limitation.
\end{rem}

\begin{rem}
The $O(k^2\!/N^2)$
entropic estimate in Theorem~\ref{thm:entropy} appears sharp.
Indeed, set
\[
(\delta H)^k_t = H^{k+1}_t - H^k_t,\quad
(\delta^2 H)^k_t = (\delta H)^{k+1}_t - (\delta H)^k_t.
\]
By the entropy chain rule,
\[
(\delta^2 H)^k_t =
\Expect \bigl[ H \bigl( \Law(X^{k+1}_t, X^{k+2}_t | X^1_t,\ldots,X^k_t)
\big| \Law(X^{k+1}_t | X^1_t,\ldots,X^k_t)^{\otimes 2} \bigr) \bigr].
\]
In other words, each $(\delta^2 H)^k_t$ corresponds to a conditional
$2$-cumulant, which is expected to be $O(1/N)$ for mean field interactions.
Since relative entropy scales quadratically in the small scale,
we expect each $(\delta^2 H)^k_t$ to be $O(1/N^2)$.
By the difference relation
\[
H^k_t
= k H^1_t + \sum_{\ell=1}^{k-1} \sum_{n=0}^{\ell-1}
(\delta^2 H)^n_t,
\]
we expect $H^k_t$ to be $O(k^2\!/N^2)$.
Gaussian dynamics on the whole space saturates this bound
\cite[Example~2.8]{LackerHier},
although an explicit example on the torus is,
to the author's knowledge, still unknown.

As noted in the introduction, the $\chi^2$ divergence dominates the
relative entropy. Thus, Theorem~\ref{thm:l2} appears sharp as $N \to \infty$
with $k$ fixed. However, the precise $\chi^2$ behavior when both $k$ and
$N \to \infty$ remains unknown, even for regular interactions.
\end{rem}

\subsection*{Further remarks}

\subsubsection*{\boldmath$\nabla\cdot K_1 = 0$ is not restrictive}
First, as noted in \cite{JabinWang},
the condition that the singular part $K_1$ is divergence-free
is not restrictive.
Indeed, if the interaction force kernel $K$ admits the decomposition
$K = K'_1 + K'_2$, where both $K'_1$ and $\nabla \cdot K'_1$ belong to
$W^{-1,\infty}$ (which is the regularity assumption of \cite{JabinWang}),
and $K'_2 \in L^\infty$,
we can find, by definition, a bounded vector field $S$ such that
$\nabla \cdot K'_1 = \nabla \cdot S$.
By shifting the components of $S$ by constants,
we can also suppose without loss of generality that
this vector field verifies $\int_{\T^d} S = 0$.
Thus, we have the alternative decomposition
\[
K = (K'_1 - S) + (K'_2 + S),
\]
where the first part $K'_1 - S$ is divergence-free
and the second part $K'_2 + S$ is bounded.
Since $S \in L^\infty$ and $\int_{\T^d} S=0$, we can find a bounded matrix field
$V_S$ such that $\nabla \cdot V_S = S$
and $\lVert V_S\rVert_{L^\infty} \leqslant C_d \lVert S\rVert_{L^\infty}$
for some $C_d$ depending only on the dimension $d$.%
\footnote{For example one can take
$V^{1i}_S(x^1,x^2,\ldots,x^d) = \int_0^{x^1} S^i(y,x^2,\ldots,x^d) \dd y$
for $i \in [d]$ and $V_S^{ji} = 0$ for $j \neq 1$.}
So the new decomposition satisfies the main assumption
and it only remains to verify the respective ``smallness'' conditions
of the two theorems for the force kernel $K'_1-S$.

\subsubsection*{Weak vortex interaction}
Second, Theorem~\ref{thm:entropy} applies to the 2D viscous vortex model
if the vortex interaction is weak enough.
Indeed, in the vortex case, we have $K = \nabla \cdot V$
for some $V \in L^\infty$ and $\nabla \cdot K = 0$
so the main assumption is satisfied with $K_2 = 0$.
The required regularity bounds for the mean field flow $m_t$
have been established in \cite{GLBMVortex,CdCRSUniform}.
More precisely, it is shown in \cite[Section 3.2]{CdCRSUniform} that
if the initial value $m_0$ of the mean field equation belongs to
$W^{2,\infty}(\T^d)$ and verifies the lower bound $\inf m_0 > 0$,
then we have the required decaying bound on the regularity:
\[
\lVert \nabla \log m_t\rVert_{L^\infty}^2
+ \lVert \nabla^2 \log m_t \rVert_{L^\infty}
\leqslant M_m e^{-\eta t}.%
\footnote{The rate of convergence stated in \cite{CdCRSUniform}
is not explicit. However, it seems to the author that
we can take $\eta = 4\pi^2$ by the following argument.
First by computing the evolution of the entropy $H(m_t)$
and integrating by parts à la Jabin--Wang, we find that
$\dd H(m_t)/\!\dd t = - I(m_t) \leqslant - 8\pi^2 H(m_t)$
thanks to the log-Sobolev inequality
(see also \cite[Proof of Theorem 4.11]{lsiut}),
and therefore $H(m_t) \lesssim e^{-8\pi^2t}$.
This implies that $\lVert m_t - 1\rVert_{L^1} \lesssim e^{-4\pi^2t}$
by Pinsker.
Then we use the hypercontractivity
\cite[Corollary 2.4]{CdCRSUniform}
and the regularization \cite[Proposition 2.6]{CdCRSUniform} to find that
$\lVert \nabla m_t\rVert_{L^\infty}$,
$\lVert \nabla^2m_t\rVert_{L^\infty} \lesssim e^{-4\pi^2 t}$
so the desired bound follows with $\eta = 4\pi^2$.
This rate is optimal as it is verified by the heat equation ($K=0$)
with initial data $m_0(x) = 1 + a \sin (2\pi x) + b \cos(2 \pi x)$.
With $\eta = 4\pi^2$, the minimum for the rate in the second assertion
of Theorem~\ref{thm:entropy} is equal to
$\min(1,2 - 2\lVert V\rVert_{L^\infty})4\pi^2$.}
\]
So Theorem~\ref{thm:entropy} applies if $\lVert V\rVert_{L^\infty} < 1$.
By scaling arguments, this is equivalent to a high viscosity
or high temperature condition.
In this regime,
the second assertion of Theorem~\ref{thm:entropy}
provides a finer long-time convergence estimate
on the relative entropies for the 2D viscous vortex model
compared to the global results in \cite{GLBMVortex,CdCRSUniform},
which apply more generally without this weak interaction restriction.
It is unclear to the author
if the weak interaction restriction can be lifted;
see also the discussion on $L^2$ results in below.

\subsubsection*{\boldmath$L^d$ interaction of any strength}
On the contrary, if the interaction force kernel $K$
is of the slightly higher regularity class
\[
K \in L^d,~\nabla \cdot K \in L^d,
\]
then Theorem~\ref{thm:entropy} can be applied without any restriction
on the strengh of $K$.
To this end, we consider
$K^\varepsilon = K \star \rho^\varepsilon$ where $\rho^\varepsilon$
is a sequence of $\mathcal C^\infty$ mollifiers on $\T^d$.
Since $\int_{\T^d} K - K^\varepsilon = 0$ and
$\int_{\T^d} \nabla \cdot K - \nabla \cdot K^\varepsilon = 0$,
the result of Bourgain and Brezis \cite{BourgainBrezis} indicates that
we can find a matrix field $V$ and a vector field $S$ on $\T^d$
solving the equations $\nabla \cdot V = K - K^\varepsilon$
and $\nabla \cdot S = \nabla \cdot K - \nabla \cdot K^\varepsilon$
with the bounds
\begin{align*}
\lVert V\rVert_{L^\infty} &\leqslant C_d
\lVert K - K^\varepsilon\rVert_{L^\infty}, \\
\lVert S\rVert_{L^\infty} &\leqslant C_d
\lVert \nabla \cdot K - \nabla \cdot K^\varepsilon\rVert_{L^\infty}
\end{align*}
for some $C_d > 0$ depending only on $d$.
By shifting the components of $S$, we can suppose that
$\int_{\T^d} S = 0$ and this does not alter the $L^\infty$ bound on $S$ above.
We find again a matrix field $V_S$ such that $\nabla \cdot V_S = S$
and $\lVert V_S\rVert_{L^\infty} \leqslant C_d \lVert S\rVert_{L^\infty}$.
Then we decompose the force kernel $K$ in the following way:
\[
K = (K - K^\varepsilon) + K^\varepsilon
= \nabla \cdot V + K^\varepsilon
= \nabla \cdot (V - V_S) + (K^\varepsilon + S).
\]
By construction, the singular part is divergence-free:
\[
\nabla^2\colonbin(V-V_S) = \nabla \cdot (K - K^\varepsilon)
- \nabla \cdot S = 0,
\]
and the remaining part $K^\varepsilon + S$ is bounded, so the main assumption
is satisfied.
The $W^{-1,\infty}$ norm of the singular part is controlled by
\[
\lVert V - V_S \rVert_{L^\infty}
\leqslant \lVert V\rVert_{L^\infty} + \lVert V_S\rVert_{L^\infty}
\leqslant C_d \bigl( \lVert K - K^\varepsilon\rVert_{L^d}
+ \lVert \nabla \cdot K - \nabla \cdot K^\varepsilon\rVert_{L^d} \bigr).
\]
Yet, the mollification is continuous in $L^d$:
\[
\lVert K - K^\varepsilon\rVert_{L^d},~
\lVert \nabla \cdot K - \nabla \cdot K^\varepsilon\rVert_{L^d}
\to 0,\qquad\text{when $\varepsilon\to 0$.}
\]
So in order to apply Theorem~\ref{thm:entropy}, it suffices to take
an $\varepsilon$ small enough.
In a previous work, Han \cite[Theorem 1.2]{HanEnt} derived
global $O(1/N^2)$ PoC under the assumption that
$K$ is divergence-free and belongs to $L^p$ for some $p > d$,
and the $N$-particle initial measure satisfies
the density bound $\lambda^{-1} \leqslant m^N_0 \leqslant \lambda$
uniformly in $N$.
In comparison to this work,
our method achieves two major improvements: first,
the critical Krylov--Röckner exponent $p = d$ is treated \cite{KrylovRockner};
and second, the rather demanding condition on $m^N_0$
(which excludes non-trivial chaotic data $m^N_0 = m_0^{\otimes N}$
for $m_0 \neq 1$) is lifted.
These improvements are made possible by our consideration
of the new hierarchy involving Fisher information
(see Proposition~\ref{prop:ent-hier})
and a Jabin--Wang type large deviation estimate
(see Corollary~\ref{cor:concentration}).

\subsubsection*{Vortex interaction of any strength by \boldmath$L^2$}
By a similar regularity trick,
the $L^2$ result of Theorem~\ref{thm:l2}
can be applied to the 2D viscous vortex model of any interaction strength.
Indeed, as in the case, $K = \nabla \cdot V$ for $V \in L^\infty$
and $\nabla \cdot K = 0$, we can decompose
\[
K = (K - K^\varepsilon) + K^\varepsilon
= \nabla \cdot (V - V^\varepsilon) + K^\varepsilon,
\]
where $K^\varepsilon = K\star \rho^\varepsilon$ and
$V^\varepsilon = V\star \rho^\varepsilon$.
Then the $L^2$ constant in Theorem~\ref{thm:l2} satisfies
\[
M_{V - V^\varepsilon} \coloneqq \sup_{t\in [0,T]}
\sup_{x \in \T^d}\int_{\T^d} \lvert (V-V^\varepsilon)(x-y)\rvert^2 m_t(\dd y)
\leqslant \lVert V - V^\varepsilon \rVert_{L^2}^2
\sup_{t \in [0,T]} \lVert m_t \rVert_{L^\infty},
\]
and can be arbitrarily small as $\varepsilon \to 0$.
Thus Theorem~\ref{thm:l2} gives an $O(1/N^2)$ PoC estimate in short time.
Since our treatment of the $L^2$ hierarchy in Proposition~\ref{prop:l2-hier}
is rather crude, it seems possible to the author that
the explosion in finite time is sub-optimal.
Here, the major technical difficulty is that
we cannot force the hierarchy to stop at a certain level
$k \sim N^\alpha$, $\alpha < 1$ as done in Hess-Child--Rowan \cite{HCRHigher}.
And this is due to the fact that
we do not have a priori bounds on $L^2$ distances and Dirichlet energies
that are strong enough.

\subsubsection*{Dynamics on the whole space}
The global-in-time framework of Theorem~\ref{thm:entropy}
extends to the whole space.
In the proof of the theorem,
the only obstruction is that
$\nabla \log m_t$ is no longer in $L^\infty$.
Yet \textcite{FWVortex} recently proved that on the whole space,
\[
\lvert \nabla \log m_t (x) \rvert \leqslant C e^{Ct} (1 + \lvert x\rvert).
\]
This regularity bound is sufficient for the Jabin--Wang method,
which controls the inner interaction terms in the proof.
Proposition~\ref{prop:transport} can likewise be modified
to handle linear growth using the weighted Pinsker inequality
of \textcite{BolleyVillaniCKP},
which in turn controls the outer interaction terms.
For uniform-in-time estimates, one should add quadratic confinement
for the vortices and instead consider relative densities
with respect to a Gaussian;
see \cite{lsiut,RSWhole1} for details.

\section{Proof of Theorems~\ref{thm:entropy} and \ref{thm:l2}}
\label{sec:proof}

\subsection{Setup and proof outline}

In the proof we will work with regularized solutions
introduced in Section~\ref{sec:intro}
and prove the bounds in both theorems for these approximations.
Then the result holds for the original solutions
by lower semi-continuity. See \cite{lsiut} for details.

In the following,
we will perform the entropic and $L^2$ computations at the same time
in order to exploit the similarity between them.
We set $p = 1$ for the entropic computations
and $p = 2$ for the $L^2$ computations.
Then, we can write the relative entropy and the $L^2$ distance between
$m^{N,k}_t$ and $m_t^{\otimes k}$ formally as
\[
\D_p^k \coloneqq
\D_p \bigl( m^{N,k}_t \big| m_t^{\otimes k} \bigr) \coloneqq \frac{1}{p-1}
\biggl(\int_{\T^{kd}} \bigl(h^{N,k}_t\bigr)^p \dd m^{\otimes k}_t - 1\biggr),
\quad\text{where}~h^{N,k}_t \coloneqq \frac{m^{N,k}_t}{m_t^{\otimes k}}.
\]
The expression makes sense classically in the $L^2$ case where $p = 2$.
In the entropic case, this notation is motivated by the fact that
\[
\lim_{p \searrow 1}\frac{1}{p-1} \biggl( \int h^p \dd m - 1\biggr)
= \int h \log h \dd m
\]
for all postive $h$ that is upper and lower bounded (away from zero)
and all probability measure $m$ such that $\int h \dd m = 1$.

Then, we use the BBGKY hierarchy \eqref{eq:BBGKY}
and the tensorized mean field equation \eqref{eq:mf-fp}
to calculate the time derivative of $\D_p^k$.
We find
\begin{align*}
\frac{1}{p}\frac{\dd \D^k_p}{\dd t}
&= - \int_{\T^{kd}} \bigl( h^{N,k}_t\bigr)^{p-2}
\bigl| \nabla h^{N,k}_t \bigr|^2 \dd m_t^{\otimes k} \\
&\mathrel{\hphantom{=}}\negmedspace{} +
\begin{aligned}[t]\frac 1{N-1}\sum_{i,j\in [k] : i \neq j}
\int_{\T^{kd}}
&\bigl( h^{N,k}_t \bigr)^{p-1} \nabla_i h^{N,k}_t \\
&\quad\negmedspace{}
\cdot \bigl( K(x^i - x^j) - K \star m_t(x^i) \bigr)
m^{\otimes k}_t (\dd\x^{[k]}) \end{aligned}\\
&\mathrel{\hphantom{=}}\negmedspace{} +
\begin{aligned}[t] \frac{N-k}{N-1} \sum_{i\in [k]} \int_{\T^{kd}}
&\bigl( h^{N,k}_t \bigr)^{p-1} \nabla_i h^{N,k}_t \\
&\quad\negmedspace{}\mathrel{\cdot}
\Bigl< K(x^i - \cdot), m^{N,(k+1)|k}_{t} (\cdot | \x^{[k]}) - m_t \Bigr>
m_t^{\otimes k} (\dd\x^{[k]}),
\end{aligned}
\end{align*}
where the conditional measure $m^{N,(k+1)|k}_t (\cdot | \cdot)$ is defined as
\[
m^{N,(k+1)|k}_{t} (x^* | \x^{[k]})
\coloneqq \frac{m^{N,k+1}_t(\x^{[k]}, x^*)}{m^{N,k}_t(\x^{[k]})}
\]
Define also
\[
\E^k_p \coloneqq \int_{\T^{kd}} \bigl( h^{N,k}_t\bigr)^{p-2}
\bigl| \nabla h^{N,k}_t \bigr|^2 \dd m_t^{\otimes k}.
\]
This expression makes sense for both $p = 1$ and $2$,
and is the relative Fisher information
$I^k_t = I\bigl( m^{N,k}_t \big| m_t^{\otimes k} \bigr)$ for $p = 1$,
and the Dirichlet energy
$E^k_t = E\bigl( m^{N,k}_t \big| m_t^{\otimes k} \bigr)$ for $p = 2$.
Denote by $A$ and $B$
the last two terms in the equality above for $p^{-1}\dd D^k_p/\!\dd t$.
We find that $A = A_1 + A_2$ and $B = B_1 + B_2$ where
\[
A_a \coloneqq \frac 1{N-1}\sum_{i,j\in [k] : i \neq j}
\int_{\T^{dk}} \bigl( h^{N,k}_t \bigr)^{p-1} \nabla_i h^{N,k}_t
\cdot \bigl( K_a(x^i - x^j) - K_a \star m_t(x^i) \bigr)
m^{\otimes k}_t (\dd\x^{[k]})
\]
and
\begin{multline*}
B_a \coloneqq  \frac{N-k}{N-1} \sum_{i\in [k]}
\int_{\T^{dk}} \bigl( h^{N,k}_t \bigr)^{p-1} \nabla_i h^{N,k}_t \\
\cdot \Bigl< K_a(x^i - \cdot), m^{N,(k+1)|k}_{t} (\cdot | \x^{[k]}) - m_t \Bigr>
m_t^{\otimes k} (\dd\x^{[k]}),
\end{multline*}
for $a = 1$, $2$, since the expressions are linear in $K$
and the force kernel admits the decomposition $K = K_1 + K_2$.
Thus, the evolution of $\D^k_p$ writes
\[
\frac{1}{p}\frac{\dd \D^k_p}{\dd t}
= - \E^k_p + A_1 + A_2 + B_1 + B_2.
\]
We call $A_1$, $A_2$ the \emph{inner interaction} terms,
and $B_1$, $B_2$ the \emph{outer interaction} terms,
as the first two terms correspond to the interaction between the first $k$
particles themselves, and the last two terms to the interaction
between the first $k$ and the remaining $N - k$ particles.

We aim to find appropriate upper bounds for the last four interaction terms
$A_1$, $A_2$, $B_1$, $B_2$ in the rest of the proof.
To be precise, we will show in the entropic case $p=1$ the following
system of differential inequalities:
\[
\frac{\dd H^k_t}{\dd t}
\leqslant -c_1 I^k_t + c_2 I^{k+1}_t \1_{k < N}
+ M_1 H^k_t +  M_2 k \bigl( H^{k+1}_t - H^k_t \bigr) \1_{k < N}
+ M_3 \frac{k^\beta}{N^2},
\]
where $\beta$ is an integer $\geqslant 2$ and $c_1$, $c_2$,
$M_i$, $i \in [3]$ are nonnegative constants such that $c_1 > c_2$.
This hierarchy differs from that of \textcite{LackerHier}, as an additional term
$c_2 I^{k+1}_t$ is introduced to control the outer interaction terms,
reflecting the singularity of the force kernel.
This is due to the singularity of the force kernel.
In the $L^2$ case $p=2$, we show that
\[
\frac{\dd D^k_t}{\dd t}
\leqslant -c_1 E^k_t + c_2 E^{k+1}_t \1_{k < N}
+ M_2 k D^{k+1}_t \1_{k < N}
+ M_3 \frac{k^2}{N^2},
\]
where again $c_1 > c_2 \geqslant 0$ and $M_2$, $M_3 \geqslant 0$.
Again, the difference from \textcite{HCRHigher} lies in the inclusion
of the term $c_2 E^{k+1}_t$, required by the kernel singularity.
We will then apply the results from the following section
(Propositions~\ref{prop:ent-hier} and \ref{prop:l2-hier}) to solve
the hierarchies and this will conclude the proof.

\subsection{Two lemmas on inner interaction terms}

We present two lemmas that will be useful for controlling
the inner interactions terms $A_1$, $A_2$.
Their proofs are provided after their statements.
The first lemma treats two cases, $p = 1$ and $p = 2$.
The case $p = 1$ was established in \cite{LackerHier},
while the case $p = 2$ appears implicitly in \cite{HCRHigher}.
For completeness, we provide a full statement and proof here.
The second lemma, in the case $p = 1$, extends
\cite[Theorem~4]{JabinWang}.
Proofs are given after the statements.

\begin{lem}
\label{lem:internal-interaction-bounded}
Let $p \in \{1, 2\}$ and $k$ be an integer $\geqslant 2$.
Let $m \in \mathcal P(\T^d)$ and $h : \T^{kd} \to \R_{\geqslant 0}$
be exchangeable.
Suppose additionally that $\int_{\T^{kd}} h \dd m^{\otimes k} = 1$.
Let $U : \T^{2d} \to \R^d$ be bounded.
For $i \in [k]$, denote
\[
a \coloneqq \sum_{j\in [k]: j \neq i} \int_{\T^{kd}}
h^{p-1} \nabla_i h \cdot \bigl( U(x^i, x^j)
- \langle U(x^i,\cdot), m\rangle \bigr)
m^{\otimes k}(\dd\x^{[k]}),
\]
where $\langle U(x^i, \cdot), m\rangle = \int_{\T^d} U(x^i, y) m(\dd y)$.
Then in the case $p = 1$, we have for all $\varepsilon > 0$,%
\footnote{Here, and in the following,
if a bracket without conditions appears in a math expression,
it means that both alternatives are valid.}
\[
a \leqslant \varepsilon
\int_{\T^{kd}} \frac{\lvert \nabla_i h\rvert^2}{h} \dd m^{\otimes k}
+ \frac{\lVert U\rVert_{L^\infty}^2}{\varepsilon} \times \begin{cases}
(k-1)^2 \\
(k-1) + (k-1)(k-2)\sqrt{2H(m^3 | m^{\otimes 3})}
\end{cases}
\]
where $m^3$ is the $3$-marginal of the probability measure $h m^{\otimes k}$:
\[
m^3(\dd x^{[3]}) = \int_{\T^{(k-3)d}} h m^{\otimes k} \dd\x^{[k]\setminus[3]}.
\]
And in the case $p = 2$, we have for all $\varepsilon > 0$,
\[
a \leqslant \varepsilon
\int_{\T^{kd}} \lvert \nabla_i h\rvert^2 \dd m^{\otimes k}
+ \frac{2(k-1)^2\lVert U\rVert_{L^\infty}^2}{\varepsilon} D
+ \frac{2(k-1)\lVert U\rVert_{L^\infty}^2}{\varepsilon},
\]
where $D = \int_{\T^{kd}} (h-1)^2 \dd m^{\otimes k}$.
\end{lem}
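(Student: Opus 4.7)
The plan is to apply Cauchy--Schwarz followed by Young's inequality in order to extract the required $\varepsilon$-weighted gradient term in both cases, and then to bound the second moment of the centered field
\[
\Psi(\x^{[k]}) \coloneqq \sum_{j \in [k] : j \neq i}
\bigl[U(x^i,x^j) - \langle U(x^i,\cdot), m\rangle\bigr]
\]
by exploiting the fact that each summand $W_j \coloneqq U(x^i,x^j) - \langle U(x^i,\cdot),m\rangle$ is centered in the variable $x^j$ under $m$.

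\textbf{Case $p = 1$.} Factoring $\nabla_i h \cdot \Psi = (\nabla_i h/\sqrt h)(\sqrt h\,\Psi)$, Cauchy--Schwarz and Young give
\[
a \leqslant \varepsilon \int_{\T^{kd}} \frac{\lvert\nabla_i h\rvert^2}{h} \dd m^{\otimes k}
+ \frac{1}{4\varepsilon}\int_{\T^{kd}} h \lvert\Psi\rvert^2 \dd m^{\otimes k}.
\]
The crude branch then follows from the pointwise bound $\lvert\Psi\rvert^2 \leqslant 4(k-1)^2\lVert U\rVert_{L^\infty}^2$ together with $\int h \dd m^{\otimes k}=1$. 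For the sharp branch, expand $\lvert\Psi\rvert^2 = \sum_{j}\lvert W_j\rvert^2 + \sum_{j \neq j'}W_j \cdot W_{j'}$. Each of the $k-1$ diagonal integrals is at most $4\lVert U\rVert_{L^\infty}^2$. For each cross term, the integral against $h m^{\otimes k}$ depends only on the joint law of $(x^i,x^j,x^{j'})$, which by exchangeability equals $m^3$; since centering of $W_j$ in $x^j$ kills this integral when $m^3 = m^{\otimes 3}$, we obtain
\[
\int h\,W_j \cdot W_{j'}\dd m^{\otimes k}
= \int (h^3-1)\,W_j \cdot W_{j'}\dd m^{\otimes 3}
\leqslant 4\lVert U\rVert_{L^\infty}^2 \lVert h^3 - 1\rVert_{L^1(m^{\otimes 3})},
\]
where $h^3 = m^3/m^{\otimes 3}$. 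Pinsker's inequality then controls the last factor by $\sqrt{2 H(m^3 | m^{\otimes 3})}$, and summing the $k-1$ diagonal plus $(k-1)(k-2)$ cross contributions yields the stated bound.

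\textbf{Case $p = 2$.} Decompose $h\nabla_i h = \nabla_i h + (h-1)\nabla_i h$ to split $a = a'' + a'$. For $a''$, the centering of each $W_j$ in $x^j$ under the \emph{product} measure $m^{\otimes k}$ makes all cross contributions vanish by independence, so
$\int \lvert\Psi\rvert^2 \dd m^{\otimes k} = \sum_{j \neq i}\int \lvert W_j\rvert^2 \dd m^{\otimes k} \leqslant 4(k-1)\lVert U\rVert_{L^\infty}^2$;
Cauchy--Schwarz and Young with parameter $\varepsilon/2$ then give
$a'' \leqslant (\varepsilon/2)\int\lvert\nabla_i h\rvert^2 \dd m^{\otimes k} + 2(k-1)\lVert U\rVert_{L^\infty}^2/\varepsilon$. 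For $a'$, the pointwise bound $\lvert\Psi\rvert^2 \leqslant 4(k-1)^2\lVert U\rVert_{L^\infty}^2$ together with Cauchy--Schwarz and Young (again with parameter $\varepsilon/2$) produces $a' \leqslant (\varepsilon/2)\int\lvert\nabla_i h\rvert^2 \dd m^{\otimes k} + 2(k-1)^2\lVert U\rVert_{L^\infty}^2 D/\varepsilon$, and adding the two pieces closes the lemma.

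The only substantive step is the sharp branch of the $p=1$ case, where the decorrelation of cross terms under $h m^{\otimes k}$ must be quantified through Pinsker's inequality on the three-particle marginal (strict independence being available only under $m^{\otimes k}$). Everything else is elementary Cauchy--Schwarz and Young manipulation.
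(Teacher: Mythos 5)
Your proposal is correct and follows essentially the same route as the paper: Young's inequality to peel off the $\varepsilon$-weighted gradient term, the pointwise bound and the decomposition $h=(h-1)+1$ in the $p=2$ case, and for the sharp $p=1$ branch the expansion of $\lvert\Psi\rvert^2$ with diagonal terms bounded crudely and cross terms reduced to the $3$-marginal $m^3$, where independence under $m^{\otimes 3}$ plus Pinsker gives the $\sqrt{2H(m^3|m^{\otimes 3})}$ factor. The only cosmetic difference is that you split $a$ into two pieces and apply Young twice with parameter $\varepsilon/2$ where the paper does it in one line, yielding the same constants.
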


\begin{lem}
\label{lem:internal-interaction-bounded-ibp}
Under the same setting as in Lemma~\ref{lem:internal-interaction-bounded},
let $\phi : \T^{2d} \to \R$ be a bounded function verifying
$\phi(x, x) = 0$ for all $x \in \T^d$ and
\[
\int_{\T^d} \phi(x,y) m(\dd y) = \int_{\T^d} \phi(y,x) m(\dd x) = 0,
\qquad\text{for all $x \in \T^d$.}
\]
Then we have
\begin{align*}
\MoveEqLeft
\sum_{i,j\in [k]} \int_{\T^{kd}}
h^p \phi(x^i, x^j) m^{\otimes k}(\dd\x^{[k]}) \\
&\leqslant \lVert \phi\rVert_{L^\infty}
\biggl[\sqrt{2\CJW} N
\biggl( \D_p + \frac{3k^2}{N^2} \biggr)
+ k^2 \D_p \1_{p=2} \biggr],
\end{align*}
where $\CJW$ is a universal constant to be defined in
Section~\ref{sec:jw-lem} and $\D_p$ is defined by
\[
\D_p \coloneqq \begin{cases}
\int_{\T^{kd}} h \log h \dd m^{\otimes k} & \text{when~$p = 1$,} \\
\int_{\T^{kd}} (h-1)^2 \dd m^{\otimes k} & \text{when~$p = 2$.}
\end{cases}
\]
\end{lem}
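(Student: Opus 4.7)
The plan is to center the double sum $F \coloneqq \sum_{i,j\in[k]} \phi(x^i, x^j)$ under $m^{\otimes k}$, expand $h^p$ around $1$, and control each piece separately. The $L^\infty$-trivial quadratic piece in the expansion yields the $k^2\D_p\mathbbm{1}_{p=2}$ contribution, while the linear-in-$(h-1)$ piece is handled by Cauchy--Schwarz (in the $L^2$ case) or by the Jabin--Wang large deviation estimate (in the entropic case), followed by Young's inequality to convert the resulting square root into the stated additive form.

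The first observation is that $\int_{\T^{kd}} F\, dm^{\otimes k} = 0$: the diagonal terms vanish from $\phi(x,x) = 0$, and every off-diagonal term $\int\phi(x^i, x^j)\, dm(x^i)\, dm(x^j)$ vanishes by the cumulant hypothesis. Writing $h^p = 1 + p(h - 1) + (h-1)^2\mathbbm{1}_{p=2}$ for $p \in \{1, 2\}$ and integrating $F$ against $h^p m^{\otimes k}$, the constant term contributes zero by the centring, leaving
\[
\int_{\T^{kd}} F h^p\, dm^{\otimes k}
= p \int_{\T^{kd}} F(h - 1)\, dm^{\otimes k}
+ \mathbbm{1}_{p=2} \int_{\T^{kd}} F(h-1)^2\, dm^{\otimes k}.
\]
The quadratic piece is handled directly by $\lVert F\rVert_{L^\infty}\D_p \leq k^2\lVert\phi\rVert_{L^\infty}\D_p$, producing the $k^2\D_p\mathbbm{1}_{p=2}$ term.

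The linear piece $\int F(h-1)\, dm^{\otimes k}$ is where the Jabin--Wang machinery enters. In the $L^2$ case $p = 2$, the elementary variance computation $\lVert F\rVert_{L^2(m^{\otimes k})}^2 \leq 2k(k-1)\lVert\phi\rVert_{L^\infty}^2$---obtained by expanding the square and observing that the cumulant property kills every cross-term except those with $\{i',j'\} = \{i,j\}$---together with Cauchy--Schwarz yields a bound of order $\lVert\phi\rVert_{L^\infty} k\sqrt{\D_p}$. In the entropic case $p = 1$, the Donsker--Varadhan variational identity reduces the problem to an exponential moment estimate for $F$ under $m^{\otimes k}$, which is precisely the content of Corollary~\ref{cor:concentration}; optimizing the free parameter gives an analogous bound $\lVert\phi\rVert_{L^\infty} k\sqrt{\CJW\, \D_p}$. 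Both cases are then unified into $\int F(h-1)\, dm^{\otimes k} \lesssim \lVert\phi\rVert_{L^\infty} k\sqrt{\CJW\, \D_p}$ (absorbing universal constants into $\CJW$), and Young's inequality $2k\sqrt{\D_p} \leq N\D_p + k^2/N$ with free parameter $N$ converts this into $\sqrt{2\CJW}\, N(\D_p + 3k^2/N^2)\lVert\phi\rVert_{L^\infty}$; the factor $3$ absorbs the AM--GM slack and the coefficient $p$ appearing in the decomposition.

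The principal obstacle lies in the entropic case: the required sub-Gaussian-type exponential moment bound on $F$ with variance proxy $\sim k^2\lVert\phi\rVert_{L^\infty}^2$ is precisely the Jabin--Wang large deviation inequality, deferred to Corollary~\ref{cor:concentration}. Once that corollary is in place, the present lemma is a mechanical assembly of the pieces above.
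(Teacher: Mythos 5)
Your decomposition $h^p = 1 + p(h-1) + (h-1)^2\1_{p=2}$, the vanishing of the constant term by the cumulant property, the $k^2\D_p\1_{p=2}$ bound for the quadratic piece, and the treatment of the linear piece in the $L^2$ case (an elementary variance computation for $F=\sum_{i,j}\phi(x^i,x^j)$ plus Cauchy--Schwarz and Young) all match the paper's argument in substance; your direct bound $\lVert F\rVert_{L^2(m^{\otimes k})}^2\leqslant 2k(k-1)\lVert\phi\rVert_{L^\infty}^2$ is even slightly sharper than the paper's appeal to Proposition~\ref{prop:counting}, and it yields the stated constants because $\CJW$ is large.

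The gap is in the entropic case. You claim that Donsker--Varadhan plus Corollary~\ref{cor:concentration}, ``optimizing the free parameter,'' gives the unified square-root bound $\int F(h-1)\,\dd m^{\otimes k}\lesssim \lVert\phi\rVert_{L^\infty}\,k\sqrt{\CJW\,\D_1}$, which you then weaken by Young's inequality. But the exponential-moment estimate of Corollary~\ref{cor:concentration} is only available for tilt parameters $\eta$ up to roughly $1/(\sqrt{2\CJW}\lVert\phi\rVert_{L^\infty}N)$ (the hypothesis $\CJW\lVert\tilde\phi\rVert_{L^\infty}^2\leqslant 1/2$ after rescaling), so the optimization over $\eta$ is constrained: the unconstrained optimizer is admissible only when $\D_1\lesssim k^2/N^2$. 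For larger $\D_1$ the square-root bound does not follow, and indeed it is false in general: $F$ is a bounded quadratic chaos, not sub-Gaussian with variance proxy $\sim k^2\lVert\phi\rVert_{L^\infty}^2$, and one can tilt $m^{\otimes k}$ onto an event of probability $e^{-ck}$ on which $F\sim k^2\lVert\phi\rVert_{L^\infty}$, so that the left-hand side is of order $k^2$ while $k\sqrt{\CJW\,\D_1}\sim k^{3/2}$. The repair is simply not to optimize: take $\eta$ at the boundary value $\eta=1/(\sqrt{2\CJW}\lVert\phi\rVert_{L^\infty}N)$ in the Donsker--Varadhan inequality, which directly produces the additive form $\sqrt{2\CJW}\lVert\phi\rVert_{L^\infty}N\bigl(\D_1+3k^2/N^2\bigr)$ with no Young step --- this is exactly what the paper does. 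A minor related point: you cannot ``absorb universal constants into $\CJW$,'' since $\CJW$ is a fixed explicit constant in the statement; here this is harmless only because $\CJW$ is enormous.
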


\begin{proof}[Proof of Lemma~\ref{lem:internal-interaction-bounded}]
In the simpler case $p = 2$, using the Cauchy--Schwarz inequality
\[
h \nabla_i h \cdot \xi
= \bigl( (h-1) + 1\bigr) \nabla_i h \cdot \xi
\leqslant \varepsilon \lvert \nabla_i h\rvert^2
+ \frac{1}{2\varepsilon} \bigl( (h-1)^2 + 1 \bigr) \lvert\xi\rvert^2,
\]
we get
\begin{multline*}
\sum_{j \in [k] : j \neq i}
h \nabla_i h \cdot \bigl( U(x^i, x^j) - \langle U(x^i,\cdot), m\rangle\bigr) \\
\leqslant \varepsilon\lvert\nabla_i h\rvert^2
+ \frac{1}{2\varepsilon}\bigl((h-1)^2 + 1\bigr)
\biggl| \sum_{j \in [k]: j \neq i}
\bigl(U(x^i, x^j) - \langle U(x^i, \cdot), m\rangle\bigr) \biggr|^2
\end{multline*}
Thus, integrating against $m^{\otimes k}$, we get
\begin{align*}
\MoveEqLeft\sum_{j\in [k]: j \neq i} \int_{\T^{kd}}
h^{p-1} \nabla_i h \cdot \bigl( U(x^i, x^j)
- \langle U(x^i, \cdot), m\rangle \bigr)
m^{\otimes k}(\dd\x^{[k]}) \\
&\leqslant \varepsilon
\int_{\T^{kd}} \lvert\nabla_i h\rvert^2\dd m^{\otimes k}\\
&\mathrel{\hphantom{=}}\quad\negmedspace{}
+ \frac{1}{2\varepsilon}\int_{\T^{kd}} \bigl((h-1)^2 + 1\bigr)
\biggl| \sum_{j \in [k]: j \neq i}
\bigl(U(x^i, x^j) - \langle U(x^i,\cdot),m\rangle \bigr) \biggr|^2
m^{\otimes k}(\dd\x^{[k]}) \\
&\leqslant \varepsilon \int_{\T^{kd}} \lvert\nabla_i h\rvert^2\dd\x^{[k]}
+ \frac{(k-1)^2\lVert U\rVert_{L^\infty}^2}{2\varepsilon} D\\
&\mathrel{\hphantom{=}}\quad\negmedspace{}
+ \frac{1}{2\varepsilon} \int_{\T^{kd}} \biggl| \sum_{j \in [k]: j \neq i}
\bigl(U(x^i, x^j) - \langle U(x^i,\cdot),m\rangle \biggr|^2
m^{\otimes k}(\dd\x^{[k]}).
\end{align*}
The integral in the last term is equal to
\[
\sum_{j_1, j_2\in [k] \setminus \{i\}}\int_{\T^{kd}}
\bigl(U(x^i, x^{j_1}) - \langle U(x^i,\cdot), m\rangle \bigr)
\cdot \bigl(U(x^i, x^{j_2}) - \langle U(x^i, \cdot), m\rangle\bigr)
m^{\otimes k}(\dd\x^{[k]}),
\]
and we notice that by independence,
the integral above does not vanish only if $j_1 = j_2$.
Thus we get the upper bound
\[
\int_{\T^{kd}} \biggl| \sum_{j \in [k]: j \neq i}
\bigl(U(x^i, x^j) - \langle U(x^i,\cdot), m\rangle(x^i) \biggr|^2
m^{\otimes k}(\dd\x^{[k]})
\leqslant 4(k-1)\lVert U\rVert_{L^\infty}^2,
\]
and this finishes the proof for the $p = 2$ case.

Now treat the entropic case where $p = 1$.
Using Cauchy--Schwarz, we get
\begin{multline*}
\sum_{j \in [k] : j \neq i}
\nabla_i h \cdot \bigl( U(x^i, x^j) - \langle U(x^i,\cdot), m\rangle \bigr) \\
\leqslant\varepsilon h^{-1}\lvert\nabla_i h\rvert^2
+ \frac{1}{4\varepsilon}
\biggl| \sum_{j \in [k]: j \neq i}
\bigl( U(x^i, x^j) - \langle U(x^i,\cdot), m\rangle \bigr) \biggr|^2.
\end{multline*}
Then integrating against $m^{\otimes k}$, we find
\begin{align*}
\MoveEqLeft \sum_{i,j \in [k] : j \neq i} \int_{\T^{kd}}
\nabla_i h \cdot \bigl( U(x^i, x^j) - \langle U(x^i, \cdot), m\rangle \bigr)
m^{\otimes k}(\dd\x^{[k]}) \\
&\leqslant \varepsilon
\int_{\T^{kd}} \frac{\lvert\nabla_i h\rvert^2}{h} \dd m^{\otimes k} \\
&\mathrel{\hphantom{=}}\quad\negmedspace {}
+ \frac{1}{4\varepsilon}\int_{\T^{kd}}
\biggl| \sum_{j \in [k]: j \neq i}
\bigl( U(x^i, x^j) - \langle U(x^i,\cdot), m\rangle \bigr) \biggr|^2
hm^{\otimes k}(\dd\x^{[k]}).
\end{align*}
So it remains to upper bound the last integral.
Employing the crude bound
\[
\biggl| \sum_{j \in [k]: j \neq i}
\bigl(U(x^i, x^j) - \langle U(x^i,\cdot), m\rangle\bigr) \biggr|^2
\leqslant 4(k-1)^2 \lVert U\rVert_{L^\infty}^2
\]
and the fact that $h m^{\otimes k}$ is a probability measure, we get
\[
\int_{\T^{kd}}
\biggl| \sum_{j \in [k]: j \neq i}
\bigl( U(x^i, x^j) - \langle U(x^i,\cdot), m\rangle \bigr) \biggr|^2
h m^{\otimes k}(\dd\x^{[k]})
\leqslant 4(k-1)^2 \lVert U\rVert_{L^\infty}^2.
\]
This yields the first claim for the case $p = 1$.
For the finer bound, we again expand the square in the integrand:
\begin{align*}
\MoveEqLeft\int_{\T^{kd}} \biggl| \sum_{j \in [k]: j \neq i}
\bigl( U(x^i, x^j) - \langle U(x^i,\cdot), m\rangle \bigr) \biggr|^2
h m^{\otimes k}(\dd\x^{[k]}) \\
&= \sum_{j \in [k] \setminus \{i\}}
\int_{\T^{kd}} \lvert U(x^i, x^j) - \langle U(x^i, \cdot), m\rangle\rvert^2
h m^{\otimes k} (\dd\x^{[k]}) \\
&\mathrel{\hphantom{=}} \quad \negmedspace {} + \begin{aligned}[t]
\sum_{j_1, j_2 \in [k] \setminus \{i\} : j_1 \neq j_2} \int_{\T^{kd}}
&\bigl( U(x^i, x^{j_1}) - \langle U(x^i,\cdot), m\rangle \bigr)\\
&\quad\cdot \bigl( U(x^i, x^{j_2}) - \langle U(x^i,\cdot),m\rangle \bigr)
h m^{\otimes k} (\dd\x^{[k]}).\end{aligned}
\end{align*}
The first term can be bounded crudely by
$4(k-1)\lVert U\rVert_{L^\infty}^2$ as before.
For the second term, we notice that the integration
against the measure $h m^{\otimes k}$ can be replaced by the integration
of the variables $x^i$, $x^{j_1}$, $x^{j_2}$ against the $3$-marginal
\[
m^3(\dd x^i \dd x^{j_1} \dd x^{j_2})
= \int_{\T^{(k-3)d}} h(\x^{[k]}) m^{\otimes k}(\x^{[k]})
\dd\x^{[k]\setminus\{i,j_1,j_2\}}.
\]
Notice that, by independence, we have
\[
\int_{\T^{3d}}
\bigl( U(x^i, x^{j_1}) - \langle U(x^i,\cdot), m\rangle \bigr)
\cdot \bigl( U(x^i, x^{j_2}) - \langle U(x^i,\cdot),m\rangle \bigr)
m^{\otimes 3}(\dd x^i \dd x^{j_1} \dd x^{j_2})
= 0.
\]
Using the Pinsker inequality between $m^3$ and $m^{\otimes 3}$, we find
for $j_1 \neq j_2$,
\begin{multline*}
\int_{\T^{3d}}
\bigl( U(x^i, x^{j_1}) - \langle U(x^i,\cdot), m\rangle \bigr)
\cdot \bigl( U(x^i, x^{j_2}) - \langle U(x^i,\cdot),m\rangle \bigr)
m^3(\dd x^i \dd x^{j_1} \dd x^{j_2}) \\
\leqslant 4 \lVert U \rVert_{L^\infty}^2
\sqrt{2 H(m^3 | m^{\otimes 3})},
\end{multline*}
and this concludes the proof for the case $p = 1$.
\end{proof}

\begin{proof}[Proof of Lemma~\ref{lem:internal-interaction-bounded-ibp}]
In the case $p = 1$, thanks to the Donsker--Varadhan duality, we have
\begin{align*}
\MoveEqLeft
\sum_{i,j \in [k]} \int_{\T^{kd}} h \phi(x^i,x^j)
m^{\otimes k} (\dd\x^{[k]}) \\
&= \sum_{i,j \in [k]} \int_{\T^{kd}} (h - 1) \phi(x^i,x^j)
m^{\otimes k} (\dd\x^{[k]}) \\
&\leqslant \eta^{-1} \int_{\T^{kd}} h \log h \dd m^{\otimes k}
+ \eta^{-1} \log \int_{\T^{kd}} \exp \biggl(
\eta \sum_{i,j\in [k]} \phi(x^i, x^j) \biggr)m^{\otimes k}(\dd\x^{[k]}),
\end{align*}
for all $\eta > 0$.
Then taking $\eta$ such that
$\sqrt{2\CJW}\lVert\phi\rVert_{L^\infty} N \eta = 1$
and applying the modified Jabin--Wang estimates
in Corollary~\ref{cor:concentration}, we get
\[
\sum_{i,j \in [k]} \int_{\T^{kd}} h \phi(x^i,x^j)
m^{\otimes k} (\dd\x^{[k]})
\leqslant
\sqrt{2\CJW} \lVert\phi\rVert_{L^\infty} N
\biggl( \D_1 + \frac{3k^2}{N^2} \biggr).
\]
In the case $p = 2$, we use the elementary equality
\[
h^2 = (h - 1)^2 + 2(h - 1) + 1
\]
and get
\begin{align*}
\MoveEqLeft
\sum_{i,j\in [k]} \int_{\T^{kd}}
h^2 \phi(x^i, x^j) m^{\otimes k}(\dd\x^{[k]}) \\
&= \sum_{i,j\in [k]} \int_{\T^{kd}}
(h-1)^2 \phi(x^i, x^j) m^{\otimes k}(\dd\x^{[k]}) \\
&\mathrel{\hphantom{=}}\negmedspace\quad{}
+ 2\sum_{i,j\in [k]} \int_{\T^{kd}}
(h-1) \phi(x^i, x^j) m^{\otimes k}(\dd\x^{[k]}) \\
&\leqslant k^2 \lVert\phi\rVert_{L^\infty}
\int_{\T^{kd}} (h-1)^2 \dd m^{\otimes k} \\
&\mathrel{\hphantom{=}}\negmedspace\quad{}
+ 2 \biggl( \int_{\T^{kd}} (h-1)^2 \dd m^{\otimes k}\biggr)^{\!1/2}
\biggl[ \int_{\T^{kd}}
\biggl( \sum_{i,j\in [k]} \phi(x^i, x^j) \biggr)^{\!2}
\dd m^{\otimes k} \biggr]^{1/2}
\end{align*}
The last integral has already been estimated
in the intermediate (and in fact the easiest) step
of the Jabin--Wang large deviation lemma
(see Proposition~\ref{prop:counting}):
\[
\int_{\T^{kd}}
\biggl(\sum_{i,j\in [k]} \phi(x^i, x^j) \biggr)^{\!2}
\dd m^{\otimes k}
\leqslant 2 k^2 \CJW \lVert \phi\rVert_{L^\infty}^2.
\]
Thus we have
\[
\int_{\T^{kd}} h^2 \phi(x^i,x^j) m^{\otimes k}(\dd\x^{[k]})
\leqslant k^2 \lVert \phi\rVert_{L^\infty} \D_2
+ 2 k \lVert \phi\rVert_{L^\infty} \sqrt{2 \CJW \D_2},
\]
so the desired result follows from the Cauchy--Schwarz inequality.
\end{proof}

\subsection{Control of the inner interaction terms}

In this step, we aim to find appropriate upper bounds
for the inner interactions terms
\[
A_a \coloneqq \frac 1{N-1}\sum_{i,j\in [k] : i \neq j}
\int_{\T^{dk}} \bigl( h^{N,k}_t \bigr)^{p-1} \nabla_i h^{N,k}_t
\cdot \bigl( K_a(x^i - x^j) - K_a \star m_t(x^i) \bigr)
m^{\otimes k}_t (\dd\x^{[k]}),
\]
where $p = 1$, $2$ and $a = 1$, $2$.

\subsubsection{Control of the regular part \boldmath$A_2$}

First start with the regular part.
In this case, we directly invoke Lemma~\ref{lem:internal-interaction-bounded}
with $U(x,y) = K_2(x - y)$
and $\varepsilon = (N-1)\varepsilon_1$
for some $\varepsilon_1>0$.
Summing over $i \in [k]$, we get
\begin{align*}
A_2 &\leqslant \varepsilon_1 I^k_t
+ \frac{C\lVert K_2\rVert_{L^\infty}^2k }{\varepsilon_1(N-1)^2}
\times \begin{cases}
(k-1)^2 \\
(k-1) + (k-1)(k-2) \sqrt{H^3_t}
\end{cases}
\intertext{for the case $p = 1$, and}
A_2 &\leqslant \varepsilon_1 E^k_t
+ \frac{C\lVert K_2\rVert_{L^\infty}^2k(k-1)^2}{\varepsilon_1(N-1)^2} D^k_t
+ \frac{C\lVert K_2\rVert_{L^\infty}^2k(k-1)}{\varepsilon_1(N-1)^2}
\end{align*}
for the case $p = 2$.
In both inequalities above,
$C$ denotes a universal constant that may change from line to line,
and we adopt this convention in the rest of the proof.

\subsubsection{Control of the singular part \boldmath$A_1$}
Recall that $K_1 = \nabla \cdot V$ and $\nabla \cdot K_1 = 0$.
Then we perform the integrations by parts:
\begin{align*}
\MoveEqLeft p(N-1)A_1 \\
&= p \sum_{i,j\in [k]:i\neq j} \int_{\T^{kd}}
\bigl( h^{N,k}_t \bigr)^{p-1} \nabla_i h^{N,k}_t
\cdot \bigl ( K_1(x^i - x^j) - (K_1\star m_t) (x^i) \bigr)
m_t^{\otimes k}(\dd\x^{[k]}) \\
&= \sum_{i,j\in [k]:i \neq j} \int_{\T^{kd}}
\nabla_i \bigl( h^{N,k}_t \bigr)^{p}
\cdot \bigl ( K_1(x^i - x^j) - (K_1\star m_t) (x^i) \bigr)
m_t^{\otimes k}(\dd\x^{[k]}) \\
&= \begin{aligned}[t]- \sum_{i,j\in [k]:i\neq j}
\int_{\T^{kd}} \bigl( h^{N,k}_t \bigr)^p
&\nabla \log m_t(x^i) \\
&\quad\cdot \bigl ( K_1(x^i - x^j) - (K_1\star m_t) (x^i) \bigr)
m_t^{\otimes k}(\dd\x^{[k]}) \end{aligned} \\
&= \begin{aligned}[t] \sum_{i,j\in[k]:i\neq j} \int_{\T^{kd}}
&\nabla_i \Bigl( \bigl(h^{N,k}_t\bigr)^p \nabla \log m_t(x^i) m_t^{\otimes k}
\Bigr) \\
&\qquad\colonbin \bigl( V(x^i-x^j) - (V\star m_t)(x^i)\bigr) \dd\x^{[k]}.
\end{aligned}
\end{align*}
Noticing that $\nabla \log m_t(x^i) m_t^{\otimes k} =
\nabla_i \bigl( m_t^{\otimes k} \bigr)$, we get
\begin{multline*}
\nabla_i \Bigl( \bigl(h^{N,k}_t\bigr)^p \nabla \log m_t(x^i) m_t^{\otimes k}
\Bigr) \\
= p \bigl( h^{N,k}_t\bigr)^{p-1} \nabla_i h^{N,k}_t \otimes
\nabla \log m_t(x^i) m_t^{\otimes k}
+ \bigl( h^{N,k}_t \bigr)^p \frac{\nabla^2 m_t(x^i)}{m_t(x^i)} m_t^{\otimes k}.
\end{multline*}
Hence,
\begin{align*}
\MoveEqLeft p(N-1)A_1 \\
&= \begin{aligned}[t] p \sum_{i,j\in[k]:i\neq j}
\int_{\T^{kd}} \bigl(h^{N,k}_t\bigr)^{p-1}
&\nabla_i h^{N,k}_t \otimes \nabla \log m_t(x^i) \\ &\quad \colonbin
\bigl( V(x^i-x^j) - (V\star m_t)(x^i)\bigr) m_t^{\otimes k}(\dd\x^{[k]})
\end{aligned}\\
&\mathrel{\hphantom{=}} \negmedspace{} + \sum_{i,j\in[k]:i\neq j}
\int_{\T^{kd}} \bigl(h^{N,k}_t\bigr)^p \frac{\nabla^2 m_t(x^i)}{m_t(x^i)}
\colonbin
\bigl( V(x^i-x^j) - (V\star m_t)(x^i)\bigr) m_t^{\otimes k}(\dd\x^{[k]}) \\
&\eqqcolon p(N-1) ( A_{11} + A_{12} ).
\end{align*}

For the first part $A_{11}$,
we invoke Lemma~\ref{lem:internal-interaction-bounded}
with $U(x,y) = \nabla \log m_t(x) \cdot V(x - y)$
and $\varepsilon = (N-1)\varepsilon_2$ for some $\varepsilon_2 > 0$.
Summing over $i \in [k]$, we get
\[
A_{11} \leqslant \varepsilon_2 I^k_t
+ \frac{C \lVert \nabla \log m_t\rVert_{L^\infty}^2
\lVert V\rVert_{L^\infty}^2 k}{\varepsilon_2(N-1)^2}
\times \begin{cases}
(k-1)^2 \\
(k-1) + (k-1)(k-2) \sqrt{H^3_t}
\end{cases}
\]
for the case $p=1$, and
\[
A_{11} \leqslant \varepsilon_2 E^k_t
+ \frac{C\lVert \nabla \log m_t\rVert_{L^\infty}^2
\lVert V\rVert_{L^\infty}^2k(k-1)^2}{\varepsilon_2(N-1)^2} D^k_t
+ \frac{C\lVert \nabla \log m_t\rVert_{L^\infty}^2
\lVert V\rVert_{L^\infty}^2k(k-1)}{\varepsilon_2(N-1)^2}
\]
for the case $p=2$.

For the second part $A_{12}$,
we invoke Lemma~\ref{lem:internal-interaction-bounded-ibp} with
\[
\phi(x,y) = \begin{cases}
\frac{\nabla^2 m_t(x)}{m_t(x)}
\colonbin \bigl( V(x - y) - (V\star m_t)(x) \bigr)
& \text{if $x \neq y$,} \\
0 & \text{if $x = y$.}
\end{cases}
\]
Note that the condition
\[
\int_{\T^d} \phi(x,y) m_t (\dd y) = \int_{\T^d} \phi(y,x) m_t(\dd y) = 0
\]
is verified due to the definition of convolution and the fact that
$\nabla^2 \colonbin V = \nabla \cdot K_1 = 0$.
Thus, we get
\[
A_{12} \leqslant \frac{\lVert \nabla^2 m_t / m_t\rVert_{L^\infty}
\lVert V\rVert_{L^\infty}}{N-1}
\biggl[ CN \biggl( \D^k_p + \frac{k^2}{N^2}\biggr) + k^2 \D^k_p \1_{p=2}\biggr]
\]
where $C$ is a universal constant.

Denote
\[
M_{V,m_t} \coloneqq
\lVert \nabla \log m_t\rVert_{L^\infty}^2 \lVert V\rVert_{L^\infty}^2
+ \lVert \nabla^2 m_t/m_t\rVert_{L^\infty}
\lVert V\rVert_{L^\infty},
\]
and note that here,
since $\nabla^2 m_t/m_t = (\nabla \log m_t)^{\otimes 2} + \nabla^2 \log m_t$,
the constant $M_{V,m_t}$ is finite by the assumptions of the theorems.
Summing up $A_{11}$ and $A_{12}$, we get
\begin{align*}
A_{1} &\leqslant \varepsilon_2 I^k_t
+ C M_{V,m_t}\biggl(H^k_t + \frac{k^2}{N^2} \biggr)
+ \frac{C M_{V,m_t} k}{\varepsilon_2N^2} \times \begin{cases}
k^2 \\
k + k^2 \sqrt {H^3_t}\end{cases}
\intertext{for the case $p=1$, and}
A_{1} &\leqslant \varepsilon_2 E^k_t
+ C M_{V,m_t}
\biggl( 1+ \frac{k^2}{N} + \frac{k^3}{\varepsilon_2N^2}\biggr) D^k_t
+ C M_{V,m_t} ( 1 + \varepsilon_2^{-1}) \frac{k^2}{N^2}
\end{align*}
for the case $p=2$.

\subsection{Control of the outer interaction terms}

Now we move on to the upper bounds for the terms $B_1$, $B_2$.
Recall that they are defined by
\begin{multline*}
B_a \coloneqq  \frac{N-k}{N-1} \sum_{i\in [k]}
\int_{\T^{dk}} \bigl( h^{N,k}_t \bigr)^{p-1} \nabla_i h^{N,k}_t \\
\cdot \Bigl< K_a(x^i - \cdot), m^{N,(k+1)|k}_{t} (\cdot | \x^{[k]}) - m_t \Bigr>
m_t^{\otimes k} (\dd\x^{[k]}),
\end{multline*}
where $p = 1$, $2$ and $a = 1$, $2$.

\subsubsection{Control of the regular part \boldmath$B_2$}

For the term $B_2$, we notice that in the entropic case, we have
by the Pinsker inequality
\begin{align*}
\Bigl|\Bigl< K_2(x^i - \cdot), m^{N,(k+1)|k}_{t} (\cdot | \x^{[k]}) - m_t \Bigr>
\Bigr|
&\leqslant \lVert K_2 \rVert_{L^\infty}
\sqrt{ 2 H \bigl( m^{N,(k+1)|k}_t(\cdot|\x^{[k]}) \big| m_t \bigr)},
\intertext{and in the $L^2$ case, we have}
\Bigl|\Bigl< K_2(x^i - \cdot), m^{N,(k+1)|k}_{t} (\cdot | \x^{[k]}) - m_t \Bigr>
\Bigr|
&\leqslant \lVert K_2 \rVert_{L^\infty}
\sqrt {D \bigl( m^{N,(k+1)|k}_t(\cdot|\x^{[k]}) \big| m_t \bigr)}.
\end{align*}
In both cases, we apply the Cauchy--Schwarz inequality
\begin{align*}
\MoveEqLeft \bigl( h^{N,k}_t \bigr)^{p-1} \nabla_i h^{N,k}_t
\cdot \Bigl< K_a(x^i - \cdot), m^{N,(k+1)|k}_{t} (\cdot | \x^{[k]}) - m_t \Bigr>
\\ &\leqslant
\frac{\varepsilon_3(N-1)}{N-k}
\bigl( h^{N,k}_t \bigr)^{p-2} \bigl|\nabla_i h^{N,k}_t\bigr|^2 \\
& \mathrel{\hphantom{\leqslant}} \quad \negmedspace {}
+ \frac{(N-k)}{4\varepsilon_3(N-1)}
\Bigl|\Bigl< K_2(x^i - \cdot), m^{N,(k+1)|k}_{t} (\cdot | \x^{[k]}) - m_t \Bigr>
\Bigr|^2.
\end{align*}
Integrating against the measure $m_t^{\otimes k}$ and summing over
$i \in [k]$, we get
\begin{align*}
B_2 &\leqslant \varepsilon_3 \E^k_p
+ \frac{\lVert K_2\rVert_{L^\infty}^2(N-k)^2k}{4\varepsilon_3(N-1)^2} \\
&\mathrel{\hphantom{\leqslant}} \hphantom{\varepsilon_3 \E^k_p}
\mathrel{\hphantom{+}} \quad \times \begin{cases}
\int_{\T^{kd}}
2 H \bigl( m^{N,(k+1)|k}_t(\cdot|\x^{[k]}) \big| m_t \bigr)
m_t^{\otimes k} (\dd\x^{[k]}) & \text{when~$p=1$}\\
\int_{\T^{kd}}
D\bigl( m^{N,(k+1)|k}_t(\cdot|\x^{[k]}) \big| m_t \bigr)
m_t^{\otimes k} (\dd\x^{[k]}) & \text{when~$p=2$}
\end{cases} \\
&= \varepsilon_3 \E^k_p
+ \frac{\lVert K_2\rVert_{L^\infty}^2(N-k)^2k}{2p\varepsilon_3(N-1)^2}
\bigl( \D^{k+1}_p - \D^k_p \bigr).
\end{align*}
The last equality is a ``towering'' property
of relative entropy and $\chi^2$ distance,
which can be verified directly from the definition of conditional density.

\subsubsection{Control of the singular part \boldmath$B_1$}
Applying the Cauchy--Schwarz inequality as in the previous step yields
\[
B_1 \leqslant \varepsilon_4 \E^k_p
+ \begin{aligned}[t] &\frac{(N-k)^2k}{4\varepsilon_4(N-1)^2} \\
&\quad\times\int_{\T^{kd}} \bigl( h^{N,k}_t\bigr)^p
\Bigl|\Bigl< K_1(x^i - \cdot), m^{N,(k+1)|k}_{t}(\cdot|\x^{[k]})
- m_t\Bigr>\Bigr|^2 m_t^{\otimes k} (\dd\x^{[k]}). \end{aligned}
\]

In the entropic case where $p=1$,
applying the first inequality of Proposition~\ref{prop:transport}
in Section~\ref{sec:toolbox} with
$m_1 \to m^{N,(k+1)|k}_{t}(\cdot|\x^{[k]})$,
$m_2 \to m_t$, we get
\begin{align*}
\MoveEqLeft
\Bigl|\Bigl< K_1(x^i - \cdot), m^{N,(k+1)|k}_{t}(\cdot|\x^{[k]})
- m_t\Bigr>\Bigr|^2 \\
&\leqslant \lVert V\rVert_{L^\infty}^2
(1+\varepsilon_5)I\bigl(m^{N,(k+1)|k}_{t}(\cdot|\x^{[k]})\big|m_t\bigr) \\
&\mathrel{\hphantom{\leqslant}} \quad \negmedspace {} +
2\lVert V\rVert_{L^\infty}^2
(1+\varepsilon_5^{-1})\lVert\nabla\log m_t\rVert_{L^\infty}^2
H\bigl(m^{N,(k+1)|k}_{t}(\cdot|\x^{[k]})\big|m_t\bigr).
\end{align*}
Noticing that the conditional entropy and Fisher information satisfy
the towering property:
\begin{align*}
\int_{\T^{kd}} H\bigl(m^{N,(k+1)|k}_t(\cdot|{\x^{[k]}})
\big|m_t\bigr) m^{N,k}_t(\dd\x^{[k]})
&= H^{k+1}_t - H^k_t, \\
\int_{\T^{kd}} I\bigl(m^{N,(k+1)|k}_t(\cdot|{\x^{[k]}})
\big|m_t\bigr) m^{N,k}_t(\dd\x^{[k]})
&= \frac{I^{k+1}_t}{k+1},
\end{align*}
we integrate the equality above with respect to $m^{N,k}_t$ and obtain
\begin{multline*}
B_1 \leqslant
\varepsilon_4 I^k_t
+ \frac{(1+\varepsilon_5)\lVert V\rVert_{L^\infty}^2(N-k)^2k}
{4\varepsilon_4(N-1)^2(k+1)} I^{k+1}_t \\
+ \frac{(1+\varepsilon_5^{-1})
\lVert V\rVert_{L^\infty}^2 \lVert\nabla\log m_t\rVert_{L^\infty}^2
(N-k)^2k}
{2\varepsilon_4(N-1)^2} \bigl(H^{k+1}_t - H^k_t\bigr).
\end{multline*}

In the $L^2$ case where $p=2$,
we apply the second inequality of Proposition~\ref{prop:transport}
in Section~\ref{sec:toolbox} with
$m_1 \to m^{N,(k+1)|k}_{t}(\cdot|\x^{[k]})$,
$m_2 \to m_t$, and get
\begin{align*}
\MoveEqLeft
\Bigl|\Bigl< K_2(x^i - \cdot), m^{N,(k+1)|k}_{t}(\cdot|\x^{[k]})
- m_t\Bigr>\Bigr|^2 \\
&\leqslant M_V
(1+\varepsilon_5)E\bigl(m^{N,(k+1)|k}_{t}(\cdot|\x^{[k]})\big|m_t\bigr) \\
&\mathrel{\hphantom{\leqslant}} \quad \negmedspace {} +
M_V (1+\varepsilon_5^{-1})\lVert\nabla\log m_t\rVert_{L^\infty}^2
D\bigl(m^{N,(k+1)|k}_{t}(\cdot|\x^{[k]})\big|m_t\bigr).
\end{align*}
for $M_V \coloneqq
\sup_{t \in[0,T]}\sup_{x\in\T^d} \int_{\T^d} \lvert V(x-y)\rvert^2 m_t(\dd y)$.
Noticing that the towering property holds for
$\chi^2$ distance and Dirichlet energy:
\begin{align*}
\int_{\T^{kd}} \bigl( h^{N,k}_t \bigr)^2
D\bigl(m^{N,(k+1)|k}_{t,\x^{[k]}}\big|m_t\bigr)
m^{\otimes k}_t(\dd\x^{[k]})
&= D^{k+1}_t - D^k_t, \\
\int_{\T^{kd}} \bigl( h^{N,k}_t \bigr)^2
E\bigl(m^{N,(k+1)|k}_{t,\x^{[k]}}\big|m_t\bigr)
m^{\otimes k}_t(\dd\x^{[k]})
&= \frac{E^{k+1}_t}{k+1},
\end{align*}
we integrate against $m_t^{\otimes k}$ and get
\begin{multline*}
B_1 \leqslant \varepsilon_4 E^k_t
+ \frac{(1+\varepsilon_5)M_V(N-k)^2k}
{4\varepsilon_4(N-1)^2(k+1)} E^{k+1}_t \\
+ \frac{(1+\varepsilon_5^{-1})M_V\lVert\nabla\log m_t\rVert_{L^\infty}^2(N-k)^2
k} {4\varepsilon_4(N-1)^2} \bigl( D^{k+1}_t - D^{k}_t \bigr).
\end{multline*}

\subsection{Conclusion of the proof}

By combining the upper bounds on $A_1$, $A_2$, $B_1$, $B_2$
obtained in the previous steps, we get
\begin{align*}
\frac{\dd H^k_t}{\dd t}
&\leqslant - \bigl(1 - {\textstyle\sum_{n=1}^4 \varepsilon_n}\bigr) I^k_t
+ \frac{(1+\varepsilon_5)\lVert V\rVert_{L^\infty}^2}{4\varepsilon_4}
I^{k+1}_t \1_{k < N} \\
&\mathrel{\hphantom{\leqslant}}\quad\negmedspace {}
+ C M_{V,m_t} H^k_t \\
&\mathrel{\hphantom{\leqslant}}\quad\negmedspace {}
+ \biggl(\frac{C\lVert K_2\rVert_{L^\infty}^2}{\varepsilon_3}
+ \frac{(1+\varepsilon_5^{-1})\lVert V\rVert_{L^\infty}^2
\lVert\nabla \log m_t\rVert_{L^\infty}^2} {2\varepsilon_4}\biggr)
k \bigl( H^{k+1}_t - H^k_t\bigr) \1_{k < N} \\
&\mathrel{\hphantom{\leqslant}}\quad\negmedspace {}
+ C M_{V,m_t} \frac{k^2}{N^2}
+ C \biggl( \frac{\lVert K_2\rVert_{L^\infty}^2}{\varepsilon_1}
+ \frac{M_{V,m_t}}{\varepsilon_2} \biggr) \frac{k^2}{N^2} \times
\begin{cases}
k \\
1 + k \sqrt{H^3_t}
\end{cases}
\end{align*}
for the entropic case $p = 1$, and
\begin{align*}
\frac{1}{2}\frac{\dd D^k_t}{\dd t}
&\leqslant - \bigl(1 - {\textstyle\sum_{n=1}^4 \varepsilon_n}\bigr) E^k_t
+ \frac{(1+\varepsilon_5)M_V}{4\varepsilon_4}
E^{k+1}_t \1_{k < N} \\
&\mathrel{\hphantom{\leqslant}}\quad\negmedspace {}
+ C \biggl[
M_{V,m_t} \biggl(1 + \frac{k^2}{N} + \frac{k^3}{\varepsilon_2N^2} \biggr)
+ \frac{\lVert K_2\rVert_{L^\infty}^2k^3}{N^2} \biggr] D^k_t \\
&\mathrel{\hphantom{\leqslant}}\quad\negmedspace {}
+ \biggl(\frac{C\lVert K_2\rVert_{L^\infty}^2}{\varepsilon_3}
+ \frac{(1+\varepsilon_5^{-1})M_V\lVert\nabla\log m_t\rVert_{L^\infty}^2}
{4\varepsilon_4}\biggr)
k \bigl( D^{k+1}_t - D^k_t\bigr) \1_{k < N} \\
&\mathrel{\hphantom{\leqslant}}\quad\negmedspace {}
+ C \biggl( \frac{\lVert K_2\rVert_{L^\infty}^2}{\varepsilon_1}
+ M_{V,m_t} (1+\varepsilon_2^{-1}) \biggr) \frac{k^2}{N^2}
\end{align*}
for the $L^2$ case $p=2$.

Since $\lVert V\rVert_{L^\infty}^2$, $M_V$ are respectively supposed to be
smaller than $1$ in Theorems~\ref{thm:entropy} and \ref{thm:l2},
we can take
\[
\varepsilon_4 = \begin{cases}
\lVert V\rVert_{L^\infty} / 2 & \text{when $p = 1$,} \\
\sqrt{M_V} / 2 & \text{when $p = 2$.}
\end{cases}
\]
so that for $\varepsilon_1$, $\varepsilon_2$, $\varepsilon_3$, $\varepsilon_5$
small enough, we have
\[
1 - {\sum_{n=1}^4 \varepsilon_n}
> \frac{(1+\varepsilon_5)}{4\varepsilon_4} \cdot \begin{cases}
\lVert V\rVert_{L^\infty}^2 & \text{when $p=1$,}\\
M_V & \text{when $p=2$.} \end{cases}
\]
Additionally, for the second assertion of Theorem~\ref{thm:entropy},
since we have
\[
\frac{r_*}{8\pi^2(1 - \lVert V\rVert_{L^\infty})} \leqslant 1,
\]
we can pick the $\varepsilon_i$, for $i \in [3]$ and $i = 5$, such that
\[
1 - {\sum_{n=1}^4 \varepsilon_n}
- \frac{(1+\varepsilon_5)}{4\varepsilon_4} \lVert V\rVert_{L^\infty}^2
= 1 - \frac{2+\varepsilon_5}{2}\lVert V\rVert_{L^\infty}
- {\sum_{i=1}^3}\varepsilon_i
\geqslant \frac{r_*}{8\pi^2}.
\]
Fix these choices of $\varepsilon_i$ for $i \in [5]$
in the respective situations.

Then, for the first assertion of Theorem~\ref{thm:entropy},
we choose the first alternative
in the upper bound of $\dd H^k_t / \!\dd t$, and get
\[
\frac{\dd H^k_t}{\dd t}
\leqslant -c_1 I^k_t + c_2 I^{k+1}_t \1_{k < N}
+ M'_1 H^k_t + M'_2 k \bigl( H^{k+1}_t - H^k_t \bigr) \1_{k < N}
+ M'_3 \frac{k^3}{N^2},
\]
for $c_1 > c_2 \geqslant 0$ and some set of constants $M'_i$, $i\in [3]$.
Applying the first case of Proposition~\ref{prop:ent-hier}
in Section~\ref{sec:hier}
to the system of differential inequalities of $H^k_t$, $I^k_t$,
we get an $M'$ such that
$H^k_t \leqslant M' e^{M't} k^3 \!/ N^2$.
So taking $k=3$, we get the bound on the $3$-marginal's relative entropy:
$H^3_t \leqslant 27M'e^{M't}\!/N^2$.
Plugging this bound into the second alternative
in the upper bound of $\dd H^k_t / \!\dd t$, we get
\[
\frac{\dd H^k_t}{\dd t}
\leqslant -c_1 I^k_t + c_2 I^{k+1}_t \1_{k < N}
+ M_1 H^k_t + M_2 k \bigl( H^{k+1}_t - H^k_t \bigr) \1_{k < N}
+ M_3e^{M_3t} \frac{k^2}{N^2},
\]
for some other set of constants $M_i$, $i \in [3]$.
We apply again the first case of Proposition~\ref{prop:ent-hier} to obtain
the desired result $H^k_t \leqslant M e^{Mt}k^2\!/N^2$.

For the second assertion of Theorem~\ref{thm:entropy},
we have $K_2 = 0$ and
\[
\lVert \nabla \log m_t \rVert_{L^\infty}^2
+ \lVert \nabla^2 \log m_t \rVert_{L^\infty}
\leqslant M_m e^{-\eta t}.
\]
Taking the first alternative in the upper bound
of $\dd H^k_t/\!\dd t$, we get
\begin{multline*}
\frac{\dd H^k_t}{\dd t}
\leqslant -c_1 I^k_t + c_2 I^{k+1}_t \1_{k < N} \\
+ C M_m e^{-\eta t} H^k_t
+ C(1+\varepsilon_5^{-1})
M_m e^{-\eta t}k \bigl( H^{k+1}_t - H^k_t \bigr) \1_{k < N} \\
+ C(1+\varepsilon_2^{-1}) M_m e^{-\eta t}\frac{k^3}{N^2}.
\end{multline*}
Notice that by our choice of constants, we have
\[
c_1 - c_2 \geqslant \frac{r_*}{8\pi^2}.
\]
On the other hand, according to \cite[Proposition 5.7.5]{BGLMarkov},
the uniform measure $1$
on $\T = \R / \mathbb Z$ verifies a log-Sobolev inequality:
\[
\forall m \in \mathcal P (\T)~\text{regular enough},\qquad
8\pi^2 H(m | 1)
\leqslant I(m | 1),
\]
and the inequality with the same $8\pi^2$ constant
for the uniform measure on $\T^d$ by tensorization property.
By the gradient bound $\lVert \nabla \log m_t \rVert_{L^\infty}^2
\leqslant M_m e^{-\eta t}$, we can control the oscillation of $\log m_t$:
\[
\sup_{\T^d} \log m_t - \inf_{\T^d} \log m_t
\leqslant \frac{M_m\sqrt d}{2} e^{-\eta t}.
\]
Thus, by Holley--Stroock's perturbation result
\cite{HolleyStroockLSI},
the measure $m_t$ satisfies
a log-Sobolev inequality with constant
\[
8\pi^2 \exp \biggl( - \frac{M_m \sqrt d}{2} e^{-\eta t} \biggr),
\]
which implies
\[
I^k_t \geqslant \frac{r_*}{c_1 - c_2} H^k_t,
\]
for sufficiently large $t$.
Let $r \in (0, r_*)$ be arbitrary.
We can apply the second case of Proposition~\ref{prop:ent-hier}
and get
\[
H^k_t \leqslant M'' e^{-rt} \frac{k^3}{N^2}.
\]
We then plug the bound for $H^3_t$ back to the second alternative
in the upper bound for $\dd H^k_t/\!\dd t$ to get
\begin{multline*}
\frac{\dd H^k_t}{\dd t}
\leqslant -c_1 I^k_t + c_2 I^{k+1}_t \1_{k < N} \\
+ C M_m e^{-\eta t} H^k_t
+ C(1+\varepsilon_5^{-1})
M_m e^{-\eta t}k \bigl( H^{k+1}_t - H^k_t \bigr) \1_{k < N} \\
+ C(1+\varepsilon_2^{-1}) M_m (1+M'') e^{-\eta t}\frac{k^2}{N^2}.
\end{multline*}
Applying again the second case of Proposition~\ref{prop:ent-hier},
we obtain the desired control
\[
H^k_t \leqslant M' e^{-rt} \frac{k^2}{N^2}.
\]

Finally, in the $L^2$ case, we apply the crude bounds
$k^2\! / N \leqslant k$, $k^3\! / N^2 \leqslant k$,
$D^k_t \leqslant D^{k+1}_t$
in the second line of the upper bound for $\dd D^k_t/\!\dd t$,
and $k\bigl(D^{k+1}_t - D^k_t\bigr) \leqslant kD^{k+1}_t$
in the third line.
So we get
\[
\frac{\dd D^k_t}{\dd t}
\leqslant - c_1 E^k_t + c_2 E^{k+1}_t \1_{k < N}
+ M_2 k D^{k+1}_t \1_{k < N} + M_3 \frac{k^2}{N^2}
\]
for some $c_1 > c_2 \geqslant 0$ and $M_2$, $M_3 \geqslant 0$.
We conclude the proof by
applying Proposition~\ref{prop:l2-hier} in Section~\ref{sec:hier}
to the system of $D^k_t$, $E^k_t$.
\qed

\section{ODE hierarchies}
\label{sec:hier}

\subsection{Entropic hierarchy}

Now we move on to solving the ODE hierarchy
that is ``weaker'' than that considered in \cite{LackerHier}.
As we have seen in the previous section,
in the time-derivative of the $k$-th level entropy $\dd H^k_t/\!\dd t$,
we allow the Fisher information of the next level, i.e.\ $I^{k+1}_t$, to appear.
In this section, we show that
as long as the extra term's coefficient is controlled by the heat dissipation,
the hierarchy still preserves the $O(k^2/N^2)$ order globally in time.
This is achieved by choosing a weighted mix of entropies
at all levels $\geqslant k$ so that
when we consider its time-evolution, a telescoping sequence appears
and cancels all the Fisher informations.

\begin{prop}
\label{prop:ent-hier}
Let $T \in (0, \infty]$ and let $x^k_\cdot$, $y^k_\cdot
: [0,T) \to \R_{\geqslant 0}$ be $\mathcal C^1$ functions, for $k \in [N]$.
Suppose that $x^{k+1}_t \geqslant x^k_t$ for all $k \in [N-1]$.
Suppose that there exist integer $\beta \geqslant 2$,
real numbers $c_1 > c_2 \geqslant 0$ and $C_0 \geqslant 0$,
and functions $M_1$, $M_2$, $M_3 : [0,T) \to [0, \infty)$ such that
for all $t \in [0,T)$ and $k \in [N]$, we have
\begin{align}
x^k_0 &\leqslant \frac{C_0 k^2}{N^2}, \nonumber \\
\frac{\dd x^k_t}{\dd t} &\leqslant - c_1y^k_t + c_2y^{k+1}_t \1_{k < N}
+ M_1(t) x^k_t
+ M_2(t)k\bigl( x^{k+1}_t - x^k_t \bigr) \1_{k < N}
+ M_3(t)\frac{k^\beta}{N^2}.
\label{eq:hier}
\end{align}
Then we have the following results.
\begin{enumerate}
\item If $M_1$, $M_2$ are constant functions
and $M_3(t) \leqslant Le^{Lt}$ for some $L \geqslant 0$,
then there exists $M>0$,
depending only on $\beta$, $c_1$, $c_2$, $C_0$, $M_1$, $M_2$ and $L$,
such that for all $t \in [0,T)$, we have
\[
x^k_t \leqslant Me^{Mt} \frac{k^\beta}{N^2}.
\]
\item If $T = \infty$, the functions $M_1$, $M_2$, $M_3$
are non-increasing and satisfy
\[
M_i(t) \leqslant L e^{-\eta t}
\]
for all $t \in [0,\infty)$ and all $i \in [3]$,
for some $L \geqslant 0$, $\eta > 0$
and if $y^k_t \geqslant \rho x^k_t$ for all $t \in [t_*,\infty)$
for some $\rho > 0$ and some $t_* \geqslant 0$,
then for all $r \in \bigl( 0, \rho(c_1 - c_2) \bigr)$,
there exists $M' \geqslant 0$, depending only on
$r$, $\eta$, $\beta$, $c_1$, $c_2$, $C_0$, $L$, $\rho$ and $t_*$, such that
for all $t \in [0,\infty)$, we have
\[
x^k_t \leqslant M' e^{- \min(r,\eta)t} \frac{k^\beta}{N^2}.
\]
\end{enumerate}
\end{prop}

\begin{proof}
We prove the proposition by considering the two cases at the same time.
Notice that the relation
\[
y^k_t \geqslant \rho x^k_t
\]
trivially holds for $\rho = 0$.
We set $t_* = \infty$ in the first case.
Allowing $\rho$ to be a function of time,
we simply set $\rho(\cdot) = 0$ in the first situation
and in the second situation on the interval $[0,t_*]$
for the rest of the proof.
So formally we can write
\[
\rho(t) = \rho \1_{t\geqslant t_*}.
\]
To avoid confusion we will always write $\rho(\cdot)$
for the time-dependent function and $\rho$ for the constant.

\proofstep{Step 1: Reduction to $M_1 = 0$}
We first notice that, by defining the new variables
\[
x'^k_t = x^k_t \exp\biggl( - \int_0^t M_1(s) \dd s\biggr),~
y'^k_t = y^k_t \exp\biggl( - \int_0^t M_1(s) \dd s\biggr),
\]
we can reduce to the case where $M_1 = 0$ upon redefining $M_3$
(and therefore $L$ in the second case, but not $\eta$).
This transform does not change the relations
\[
x^{k+1}_t \geqslant x^k_t,\qquad y^k_t \geqslant \rho x^{k}_t
\]
and the initial values of $x^k$,
so we can suppose $M_1 = 0$ without loss of generality.

\proofstep{Step 2: Reduction to $k \leqslant N/2$}
Second, by taking $k = N$ in the hierarchy \eqref{eq:hier}, we find
\[
\frac{\dd x^N_t}{\dd t}
\leqslant - \rho(t) x^N_t + M_3(t) N^{\beta - 2}
\]
and thus the a priori bound follows:
\begin{equation}
\label{eq:entropy-a-priori-bound}
x^N_t \leqslant
\biggl(C_0 e^{-\int_0^t\rho}
+ \int_0^t e^{-\int_s^t\rho}M_3(s) \dd s \biggr) N^{\beta - 2}
\eqqcolon M^N_t N^{\beta - 2}
\end{equation}
In the second case where $\rho(\cdot)$ is eventually constant:
$\rho(\cdot) = \rho > 0$,
the quantity $M^N_t$
is exponentially decreasing in $t$ with rate $\min(\rho,\eta)$.
By the monotonicity of $k \mapsto x^k_t$, we get that
for all $k > N/2$,
\[
x^k_t \leqslant x^N_t
\leqslant M^N_t N^{\beta - 2}
< 2^\beta M^N_t \frac{k^\beta}{N^2}.
\]
So it only remains to establish
the upper bound of $x^k_t$ for $k \leqslant N/2$.

\proofstep{Step 3: New hierarchy}
Let $\alpha$ be an arbitrary real number $\geqslant \beta+3$.
Recall that in the second case, $r \in \bigl( 0, \rho(c_1 - c_2)\bigr)$
and in the first case we simply set $r = 0$ and adopt the convention
$0 / 0 = 0$.
Let
\[
i_0\coloneqq
\max \biggl( 1,
\inf \biggl\{ i > 0 : \frac{i^\alpha}{(i+1)^\alpha}
\geqslant \frac{c_2 + r/\rho}{c_1}\biggr\} \biggr).
\]
The number $i_0$ is always well defined,
as $\lim_{i \to \infty} i^{\alpha} \!/ (i+1)^\alpha = 1
> (c_2 + r/\rho) \big/ c_1$.
Thus, for any $i \geqslant i_0$, we have
\[
\frac{c_1}{(i+1)^\alpha} \geqslant \frac{c_2}{i^\alpha}
+ \frac{r}{\rho i^{\alpha}}.
\]
Define, for $k \in [N]$ and $t \geqslant 0$,
the following new variable:
\[
z^k_t \coloneqq
\sum_{i = k}^N \frac{x^i_t}{(i - k + i_0)^\alpha}.
\]
By summing up the ODE hierarchy \eqref{eq:hier} (with $M_1 = 0$), we find
\begin{multline}
\label{eq:z-hierarchy-1}
\frac{\dd z^k_t}{\dd t}
\leqslant - \sum_{i=k}^N \frac{c_1y^i_t}{(i-k+i_0)^\alpha}
+ \sum_{i=k}^{N-1} \frac{c_2y^{i+1}_t}{(i-k+i_0)^\alpha} \\
+ \frac{M_3(t)}{N^2} \sum_{i=k}^N \frac{i^\beta}{(i -k+i_0)^\alpha}
+ M_2(t) \sum_{i=k}^{N-1} \frac{i}{(i-k+i_0)^\alpha}
\bigl(x^{i+1}_t-x^i_t\bigr).
\end{multline}
The sum of the first two terms on the right of \eqref{eq:z-hierarchy-1}
satisfies
\begin{align*}
\MoveEqLeft
- \sum_{i=k}^N \frac{c_1y^i_t}{(i-k+i_0)^\alpha}
+ \sum_{i=k}^{N-1} \frac{c_2y^{i+1}_t}{(i-k+i_0)^\alpha} \\
&= - \frac{c_1y^k_t}{i_0^\alpha}
+ \sum_{i=k}^N \biggl( - \frac{c_1}{(i+1-k+i_0)^\alpha}
+ \frac{c_2}{(i-k+i_0)^\alpha}\biggr) y^i_t \\
&\leqslant -
\sum_{i=k}^N\frac{r\rho(t)y^i_t}{\rho(i-k+i_0)^\alpha}
\leqslant - \sum_{i=k}^N
\frac{rx^i_t}{(i-k+i_0)^\alpha}
= - r z^k_t \1_{t\geqslant t_*},
\end{align*}
thanks to our choice of $i_0$.
The third term on the right of \eqref{eq:z-hierarchy-1} satisfies
\begin{multline}
\label{eq:hier-third}
\sum_{i=k}^N \frac{i^\beta}{(i-k+i_0)^\alpha}
\leqslant C_\beta\sum_{i=k}^N \frac{(i-k)^\beta + k^\beta}{(i-k+1)^\alpha}
\leqslant C_\beta\sum_{i=1}^{\infty} \frac{(i-1)^\beta}{i^\alpha}
+ C_\beta k^\beta \sum_{i=1}^{\infty} \frac{1}{i^\alpha} \\
\leqslant C_{\alpha,\beta} k^\beta,
\end{multline}
where $C_\beta > 0$ (resp.\ $C_{\alpha,\beta} > 0$)
depends only on $\beta$ (resp.\ $\alpha$ and $\beta$).
In the following, we allow these constants to change from line to line.

For the last term on the right of \eqref{eq:z-hierarchy-1},
we perform the summation by parts:
\begin{align*}
\MoveEqLeft
\sum_{i=k}^{N-1} \frac{i}{(i-k+i_0)^\alpha}
\bigl(x^{i+1}_t-x^i_t\bigr) \\
&= - \frac{k}{i_0^\alpha} x^k_t + \frac{N}{(N-k+i_0)^\alpha}x^N_t
+ \sum_{i=k}^{N-1} \biggl( \frac{i}{(i-k+i_0)^\alpha}
- \frac{(i+1)}{(i+1-k+i_0)^\alpha}\biggr) x^{i+1}_t.
\end{align*}
The coefficient in the last summation satisfies
\begin{align*}
\MoveEqLeft \frac{i}{(i-k+i_0)^\alpha}
- \frac{(i+1)}{(i+1-k+i_0)^\alpha} \\
&= \biggl( \frac{1}{(i-k+i_0)^{\alpha-1}}
- \frac{1}{(i+1-k+i_0)^{\alpha-1}} \biggr) \\
&\mathrel{\hphantom{=}}\quad\negmedspace{} + (k-i_0)
\biggl( \frac 1{(i-k+i_0)^\alpha}
- \frac 1{(i+1-k+i_0)^\alpha}\biggr) \\
&\leqslant \frac{\alpha -1}{(i-k+i_0)^\alpha}
+ k\biggl( \frac 1{(i-k+i_0)^\alpha} - \frac 1{(i+1-k+i_0)^\alpha}\biggr),
\end{align*}
where the last inequality is due to
$j^{-\alpha+1} - (j+1)^{-\alpha+1} \leqslant (\alpha-1) j^{-\alpha}$
for $\alpha > 1$ and $j > 0$.
Thus, we have
\begin{align*}
\MoveEqLeft
\sum_{i=k}^{N-1} \frac{i}{(i-k+i_0)^\alpha} \bigl(x^{i+1}_t-x^i_t\bigr) \\
&\leqslant
- \frac{k}{i_0^\alpha} x^k_t
+ \frac{N}{(N-k+i_0)^\alpha} x^N_t
+ (\alpha -1)\sum_{i=k}^{N-1} \frac{x^{i+1}_t}{(i-k+i_0)^\alpha} \\
&\mathrel{\hphantom{\leqslant}}\quad\negmedspace {}
+ k \sum_{i=k}^{N-1} \biggl( \frac 1{(i-k+i_0)^\alpha}
- \frac 1{(i+1-k+i_0)^\alpha} \biggr) x^{i+1}_t
\end{align*}
The difference between $z^{k+1}_t$ and $z^k_t$ reads
\[
z^{k+1}_t - z^k_t
= \sum_{i=k}^{N-1} \biggl( \frac{1}{(i-k+i_0)^\alpha}
- \frac{1}{(i+1-k+i_0)^\alpha} \biggr) x^{i+1}_t
- \frac{x^k_t}{i_0^\alpha}.
\]
Then, rewriting in terms of $z^k_t$ and $z^{k+1}_t$,
we find that, for $k \in [N-1]$, the last summation satisfies
\begin{align*}
\MoveEqLeft \sum_{i=k}^{N-1} \frac{i}{(i-k+i_0)^\alpha}
\bigl(x^{i+1}_t-x^i_t\bigr) \\
&\leqslant \sum_{i=k}^{N-1} \frac{\alpha-1}{(i-k+i_0)^\alpha} x^{i+1}_t
+ k \bigl( z^{k+1}_t - z^k_t \bigr)
+ \frac{N}{(N-k+i_0)^\alpha}x^N_t \\
&\leqslant \frac{(\alpha - 1)c_1}{c_2}
\sum_{i=k+1}^N \frac{x^{i}_t}{(i-k+i_0)^\alpha}
+ k \bigl(z^{k+1}_t - z^k_t\bigr) + \frac{N}{(N-k+i_0)^\alpha}x^N_t\\
&= \frac{(\alpha-1)c_1}{c_2} z^k_t
+ k \bigl(z^{k+1}_t - z^k_t\bigr)
+ \frac{N}{(N-k+i_0)^\alpha}x^N_t.
\end{align*}
Then for $k \leqslant N/2$, we have
\begin{align*}
\MoveEqLeft \sum_{i=k}^{N-1} \frac{i}{(i-k+i_0)^\alpha}
\bigl(x^{i+1}_t-x^i_t\bigr) \\
&\leqslant \frac{(\alpha-1)c_1}{c_2} z^k_t
+ k \bigl(z^{k+1}_t - z^k_t\bigr)
+ \frac{N}{(N/2)^\alpha} x^N_t \\
&\leqslant \frac{(\alpha-1)c_1}{c_2} z^k_t
+ k \bigl(z^{k+1}_t - z^k_t\bigr)
+ \frac{2^\alpha}{N^{\alpha-1}}
M^N_tN^{\beta-2} \\
&\leqslant \frac{(\alpha-1)c_1}{c_2} z^k_t
+ k \bigl(z^{k+1}_t - z^k_t\bigr)
+ \frac{2^\alpha M^N_t}{N^2},
\end{align*}
where the last inequality is due to $\alpha \geqslant \beta + 3$.

Combining the upper bounds for all the terms on the right of
\eqref{eq:z-hierarchy-1}, we get,
for $k \leqslant N/2$,
\begin{multline}
\frac{\dd z^k_t}{\dd t}
\leqslant - r z^k_t \1_{t \geqslant t_*}
+ \frac{(\alpha-1)c_1M_2(t)}{c_2}z^k_t
+ M_2(t) k \bigl(z^{k+1}_t - z^k_t\bigr) \\
+ C_{\alpha,\beta} M_3(t) \frac{k^\beta}{N^2}
+ \frac{2^\alpha M^N_t M_2(t)}{N^2},
\label{eq:new-hier}
\end{multline}
For $k=\bar k\coloneqq\lfloor N/2\rfloor + 1$, we have
by the a priori bound \eqref{eq:entropy-a-priori-bound},
\[
z^{\bar k}_t = \sum_{i=\bar k}^N \frac{x^i_t}{(i-\bar k+i_0)^\alpha}
\leqslant x^N_t \sum_{i=k}^N \frac{1}{(i-\bar k+i_0)^\alpha}
\leqslant C_\alpha M^N_t N^{\beta -2}.
\]
According to the computations in \eqref{eq:hier-third},
the initial values of $z^k_0$, for $k \leqslant N/2$, satisfy
\[
z^k_0 \leqslant C_{\alpha} C_0 \frac{k^2}{N^2}
\eqqcolon C'_0 \frac{k^2}{N^2}.
\]
So the new hierarchy in terms of $z^k_t$ is derived.
\medskip

At this point, we can already apply the Grönwall iteration method
of Lacker \cite{LackerHier}
and, in the time-uniform case, of Lacker and Le Flem \cite{LLFSharp},
to solve the system of differential inequalities \eqref{eq:new-hier}.
However, we take a much simpler approach here
based on the following observation.
If the variable $k$ in \eqref{eq:new-hier} is no longer discrete but continuous,
then the term $M_2(t) k \bigl( z^{k+1}_t - z^k_t \bigr)$ becomes
the transport term
\[
M_2(t) k \frac{\partial z^{k+1}_t}{\partial k},
\]
and $z^k_t$ becomes a subsolution to a transport equation
\[
\frac{\partial z^k_t}{\partial t}
\leqslant - r z^k_t \1_{t \geqslant t_*}
+ M_2(t) k \frac{\partial z^k_t}{\partial k}
+ \text{source terms}.
\]
Since the transport equation verifies a comparison principle,
it suffices to construct a supersolution to the equation
that dominates $z^k_t$ on the parabolic boundary,
in order to obtain an upper bound for $z^k_t$ in the continuous case.
The crucial observation here,
which we prove in Proposition~\ref{prop:maximum-principle} in
Section~\ref{sec:maximum-principle},
is that the comparison still holds
for the discretization scheme \eqref{eq:new-hier}.
So in the following we construct supersolutions
for the system of differential inequalities
in the two cases of the proposition.

\proofstep{Step 4.1: Global-in-time estimates}
In the first case, we can control $M^N_t$
defined in \eqref{eq:entropy-a-priori-bound} by
\[
M^N_t \leqslant C_0 + e^{Lt} - 1.
\]
Thus, by the last step,
\[
z^{\bar k}_t \leqslant C_\alpha (C_0 + e^{Lt} - 1) N^{\beta-2}.
\]
where $\bar k = \lfloor N/2 \rfloor +1$ as we recall.
Now we set, for $k \leqslant N/2$,
\[
w^k_t = M e^{Mt} \frac{k^\beta}{N^2}
\]
for some $M$ to be determined.
For $M$ large enough, we have the domination
\[
w^k_t \geqslant z^k_t
\]
on the parabolic boundary
\[
\{ (t,k) \in [0,\infty) \times [N] :
\text{$t=0$ or $k = \bar k$} \}.
\]
In the interior, $w^k_t$ is an upper solution for \eqref{eq:new-hier}
if and only if
\begin{multline*}
M^2 e^{Mt} \frac{k^\beta}{N^2}
\geqslant \frac{(\alpha-1)c_1M_2}{c_2} Me^{Mt} \frac{k^\beta}{N^2}
+ M_2 \frac{k\bigl( (k+1)^\beta - k^\beta\bigr)}{N^2}
+ C_{\alpha,\beta} \frac{k^\beta}{N^2} \\
+ 2^\alpha M_2 \frac{C_0+e^{Lt}-1}{N^2}.
\end{multline*}
Noting that $(k+1)^\beta - k^\beta \leqslant \beta (k+1)^{\beta-1}
\leqslant 2^{\beta-1}\beta k^{\beta-1}$,
we can let the inequality hold by taking an $M$ large enough.
We conclude in this case by
applying the comparison principle of Proposition~\ref{prop:maximum-principle}
to $w^k_t - z^k_t$.

\proofstep{Step 4.2: Exponentially decaying estimate}
In this case, the a priori bound $M^N_t$ verifies, for some $M''>0$,
\[
M^N_t \leqslant M'' e^{-\min(r,\eta)t}.
\]
We set, for $k \leqslant N/2$,
\[
w^k_t = M'(t) \frac{k^\beta}{N^2}
\]
for some $M' : [0,\infty) \to [0,\infty)$ to be determined.
The domination $w^k_t \geqslant z^k_t$ on the boundary is satisfied if
\begin{align*}
M'(0) &\geqslant C'_0 \\
M'(t) &\geqslant C_\alpha M^N_t,
\end{align*}
In the interior, $w^k_t$ is an upper solution for \eqref{eq:new-hier}
if and only if
\begin{multline*}
\frac{\dd M'(t)}{\dd t} \geqslant
- r \1_{t \geqslant t_*} M'(t)
+ \frac{(\alpha-1)c_1M_2(t)}{c_2} M'(t)
+ M_2(t) \frac{k\bigl((k+1)^{\beta}-k^\beta\bigr)}{k^\beta} M'(t) \\
+ C_{\alpha,\beta} M_3(t) + \frac{2^\alpha M^N_t M_2(t)}{k^{\beta}}.
\end{multline*}
Note that the source terms on the second line
can be bounded by $L'' e^{-\eta t}$ for some $L'' > 0$.
Set
\[
\rho'(t) = r \1_{t \geqslant t_*}
- \biggl(\frac{(\alpha-1)c_1}{c_2} + 2^{\beta-1}\beta\biggr) M_2(t)
\]
and
\[
M'(t) = M'_0 e^{ - \int_0^t \rho'}
+ \int_0^t e^{ -\int_s^t \rho'} L'' e^{-\eta s} \dd s.
\]
We find that all conditions are satisfied for an $M'_0$ sufficiently large.
We fix such $M'_0$ and apply again Proposition~\ref{prop:maximum-principle}
to $w^k_t - z^k_t$ to conclude.
\end{proof}

\subsection{\boldmath$L^2$ hierarchy}

For the ODE system obtained from the $L^2$ hierarchy,
we only show that the $O(1/N^2)$-order bound holds until some finite time.
We note that similar hierarchies have appeared recently in
\cite{BJSNewApproach,BDJDuality}.

\begin{prop}
\label{prop:l2-hier}
Let $T > 0$ and let $x^k_\cdot$, $y^k_\cdot
: [0,T] \to \R_{\geqslant 0}$ be $\mathcal C^1$ functions, for $k \in [N]$.
Suppose that there exist real numbers $c_1 > c_2 \geqslant 0$,
and $C_0$, $M_2$, $M_3 \geqslant 0$ such that
for all $t \in [0,T]$, $k \in [N]$ and $r \in [0,1)$,
\begin{align*}
\sum_{k=1}^N r^k x^k_0 &\leqslant \frac{C_0}{N^2(1-r)^3}, \\
\frac{\dd x^k_t}{\dd t} &\leqslant - c_1y^k_t + c_2y^{k+1}_t \1_{k < N}
+ M_2kx^{k+1}_t \1_{k < N}
+ M_3\frac{k^2}{N^2}.
\end{align*}
Then, there exist $T_*$, $M>0$,
depending only on $\beta$, $c_1$, $c_2$, $C_0$, $M_2$, $M_3$,
such that for all $t \in [0,T_*\wedge T)$, we have
\[
x^k_t \leqslant \frac{Me^{Mk}}{(T_* - t)^3N^2}.
\]
\end{prop}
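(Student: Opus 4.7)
In contrast with the entropic hierarchy, here the right-hand side contains the raw term $M_2 k x^{k+1}_t$ with no compensating $-M_2 k x^k_t$. Consequently a polynomial-weighted sum $\sum_n \alpha_n x^{k+n}_t$ of the type used in Proposition~\ref{prop:ent-hier} cannot close a uniform-in-$N$ bound, because the factor $\ell$ in $\ell x^{\ell+1}_t$ grows without bound. This is the loss-of-one-index structure typical of Cauchy--Kovalevskaya hierarchies, for which the natural tool is a generating function together with the method of characteristics.

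Concretely, the plan is to introduce
\[
F(t, z) \coloneqq \sum_{k=1}^N x^k_t z^k, \qquad z \in [0, 1),
\]
and derive a transport inequality for $F$. Multiplying the hypothesis by $z^k$ and summing over $k$, the $y$-contribution reorganizes, after reindexing, as $-c_1 z y^1_t + \sum_{k=2}^N y^k_t z^{k-1}(c_2 - c_1 z)$, which is nonpositive precisely when $z \geq c_2/c_1$; this is the only place the strict inequality $c_1 > c_2$ is used. The $M_2$-term collapses to $M_2(\partial_z F - F/z) \leq M_2 \partial_z F$, and the source is bounded by $2 M_3/(N^2(1-z)^3)$ using $\sum_{k\geq 1} k^2 z^k = z(1+z)/(1-z)^3 \leq 2/(1-z)^3$. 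For every $z \in [c_2/c_1, 1)$ this yields
\[
\partial_t F(t, z) \leq M_2\, \partial_z F(t, z) + \frac{2 M_3}{N^2 (1-z)^3}.
\]

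Integrating along the characteristic $s \mapsto z + M_2(t-s)$, using the initial bound $F(0, z') \leq 2 C_0/(N^2(1-z')^3)$ which follows from $x^k_0 \leq C_0 k^2 /N^2$, and computing the source integral explicitly, one obtains
\[
F(t, z) \leq \frac{C}{N^2 (1 - z - M_2 t)^3}
\]
whenever $z + M_2 t < 1$, with $C$ depending only on $C_0$, $M_2$, $M_3$. Fixing any $z_* \in [c_2/c_1, 1)$, for instance $z_* = (1 + c_2/c_1)/2$, and setting $T_* \coloneqq (1 - z_*)/M_2$, the nonnegativity of $x^k_t$ yields $z_*^k x^k_t \leq F(t, z_*)$, so that
\[
x^k_t \leq \frac{M e^{Mk}}{N^2 (T_* - t)^3}
\]
for every $t \in [0, T_* \wedge T)$, where $M$ depends only on $C_0$, $M_2$, $M_3$, $c_1$ and $c_2$.

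The main obstacle is the tension between the absorption condition $z \geq c_2/c_1$, needed to kill the $y$-terms, and the convergence condition $z < 1$, needed for $F$ and the source integral to be finite. The former prevents $M = -\log z_*$ from being taken arbitrarily small, while the latter forces the finite-time restriction $t < T_* = (1-z_*)/M_2$. This intrinsic blow-up at $T_*$ is consistent with the author's remark that the $L^2$ hierarchy does not propagate a global-in-time $O(k^2/N^2)$ bound through this technique, in contrast with the entropic case.
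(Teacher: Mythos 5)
Your proposal is correct and follows essentially the same route as the paper: the generating function $F(t,z)=\sum_k z^k x^k_t$, absorption of the $y$-terms for $z\geqslant c_2/c_1$, the transport inequality $\partial_t F\leqslant M_2\partial_z F + 2M_3/(N^2(1-z)^3)$, and integration along characteristics. The only difference is cosmetic: you evaluate at a fixed $z_*=(1+c_2/c_1)/2$ with $T_*=(1-z_*)/M_2$, while the paper runs the characteristic down to $z=c_2/c_1$ with $T_*=(1-c_2/c_1)/M_2$; this only changes the constants $T_*$ and $M$ (and in fact avoids the degenerate evaluation point when $c_2=0$).
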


\begin{proof}
For $t \in [0,T]$ and $r \in [c_2/c_1, 1]$, we define the generating function
(or the Laplace transform) associated to $x^k_t$:
\[
F(t,r) = \sum_{k=1}^{N} r^k x^k_t.
\]
Then, taking the time-derivative of $F(t,r)$, we get
\begin{align*}
\frac{\partial F(t,r)}{\partial t}
&\leqslant - c_1 \sum_{k=1}^N r^k y^k_t
+ c_2 \sum_{k=1}^{N-1} r^k y^{k+1}_t
+ M_2 \sum_{k=1}^{N-1} kr^k x^{k+1}_t
+ \frac{M_3}{N^2} \sum_{k=1}^{N} k^2r^k \\
&\leqslant  - c_1 r y^1_t
+ \sum_{k=2}^N (c_2 - c_1r) r^{k-1} y^{k+1}_t
+ M_2 \sum_{k=1}^{N-1} kr^k x^{k+1}_t
+ \frac{M_3}{N^2} \sum_{k=1}^{N} k^2r^k \\
&\leqslant
M_2 \sum_{k=1}^{N-1} kr^k x^{k+1}_t
+ \frac{M_3}{N^2} \sum_{k=1}^{N} k^2r^k.
\end{align*}
Notice that, by taking partial derivatives in $r$, we get
\begin{align*}
\frac{\partial F(t,r)}{\partial r}
&= \sum_{k=0}^{N-1} (k+1) r^k x^{k+1}_t, \\
\frac{\partial^2}{\partial r^2} \biggl( \frac{1}{1-r} \biggr)
&= \sum_{k=0}^\infty (k+2)(k+1)r^{k}.
\end{align*}
Thus, we find
\[
\frac{\partial F(t,r)}{\partial t}
\leqslant M_2 \frac{\partial F(t,r)}{\partial r}
+ \frac{2M_3}{N^2(1-r)^3}.
\]
The initial condition of $F$ satisfies
\[
F(0,r) = \sum_{k=1}^N r^k x^k_0
\leqslant \frac{C_0}{N^2(1-r)^3}.
\]
Let
\[
T_* = \frac 1{M_2}\biggl( 1 - \frac{c_2}{c_1} \biggr)
\]
and for $t < T_* \wedge T$,
let $(r_s)_{s \in [0,t]}$ be the characteristic line:
\[
r_s = \frac{c_2}{c_1} + M_2 (t-s).
\]
We then have $r_0 \leqslant c_2/c_1 + M_2t$.
Integrating along this line, we get
\begin{align*}
F(t,r_t) &\leqslant F(0, r_0)
+ \frac{2M_3}{N^2} \int_0^t \frac{\dd s}{(1-r_s)^3} \\
&\leqslant
\frac{C_0}{N^2(1-r_0)^3} +
\frac{2M_3}{M_2N^2} \int_{r_t}^{r_0} \frac{\dd r}{(1-r)^3} \\
&\leqslant \biggl( \frac{C_0}{(1-r_0)^3}
+ \frac{M_3}{M_2(1-r_0)^2} \biggr) \frac{1}{N^2}.
\end{align*}
Thus we get
\[
x^k_t \leqslant r_t^{-k} F(t,r_t)
\leqslant \biggl( \frac{c_1}{c_2} \biggr)^{\!k}
\biggl( \frac{C_0}{\bigl(1 - M_2 t - \frac{c_2}{c_1}\bigr)^3}
+ \frac{M_3}{M_2\bigl(1 - M_2 t - \frac{c_2}{c_1}\bigr)^2} \biggr)
\frac{1}{N^2}. \qedhere
\]
\end{proof}

\begin{rem}
Proposition~\ref{prop:l2-hier} provides only $L^2$ estimates
on finite horizons, in contrast to the global-in-time result
of \textcite{HCRHigher}.
The limitation arises because, for singular interactions,
the hierarchy cannot be forced to stop at a level
$k \sim N^{\alpha}$ with $\alpha \in (0,1)$,
as no sufficiently strong a priori estimates are available
for $x_t^k$ and $y_t^k$.
\end{rem}

\section{Other technical results}
\label{sec:toolbox}

\subsection{Transport inequality for \boldmath$W^{-1,\infty}$ kernels}

One key ingredient of the entropic hierarchy of Lacker \cite{LackerHier}
is to control the outer interaction terms by the relative entropy
through the Pinsker or Talagrand's transport inequality.
In our situation, the interaction force kernel is more singular,
and we are no longer able to control the difference
by the mere relative entropy.
It turns out that the additional quantity to consider is
the relative Fisher information.%
\footnote{It has been communicated to the author
that Lacker has also obtained the inequality independently.}
Similar estimates appear in \cite[Section~2.1]{JabinWang}.
We also include the inequality for the $L^2$ hierarchy here,
as the two inequalities share the same form.

\begin{prop}
\label{prop:transport}
For all $K = \nabla \cdot V$ with $V \in L^\infty(\T^d; \R^d\times\R^d)$
and all regular enough measures $m_1$, $m_2 \in \mathcal P(\T^d)$,
we have
\begin{align*}
\lvert\langle K, m_1 - m_2\rangle\rvert
&\leqslant \lVert V\rVert_{L^\infty}
\Bigl( \sqrt{I(m_1|m_2)}
+ \lVert \nabla\log m_2\rVert_{L^\infty} \sqrt{2H(m_1|m_2)} \Bigr), \\
\lvert\langle K, m_1 - m_2\rangle\rvert
&\leqslant \lVert V\rVert_{L^2(m_2)}
\Bigl( \sqrt{E(m_1|m_2)}
+ \lVert \nabla\log m_2\rVert_{L^\infty} \sqrt{D(m_1|m_2)} \Bigr).
\end{align*}
\end{prop}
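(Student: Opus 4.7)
The plan is to write $K = \nabla\cdot V$, integrate by parts to move the derivative off $K$ onto the densities, and then split the resulting expression by the product rule for $h = m_1/m_2$. Since $K$ and $\langle K, m_1-m_2\rangle$ are vector-valued, I would first reduce to a scalar divergence by fixing a unit test vector $\xi \in \R^d$: one has
\[
\xi\cdot K = \sum_\alpha \xi_\alpha\sum_\beta \partial_\beta V_{\beta\alpha}
= \nabla\cdot W^\xi,\qquad
W^\xi_\beta \coloneqq \sum_\alpha \xi_\alpha V_{\beta\alpha},
\]
with $\lvert W^\xi\rvert\leqslant \lvert V\rvert$ pointwise, so that $\lVert W^\xi\rVert_{L^\infty}\leqslant \lVert V\rVert_{L^\infty}$ and $\lVert W^\xi\rVert_{L^2(m_2)}\leqslant \lVert V\rVert_{L^2(m_2)}$. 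Taking supremum over $\lvert\xi\rvert\leqslant 1$, it suffices to prove both inequalities for a scalar $K = \nabla\cdot W$ with vector field $W$. Using $m_1-m_2 = (h-1)m_2$, an integration by parts together with the identity $\nabla m_2 = (\nabla\log m_2) m_2$ gives the clean decomposition
\[
\langle K, m_1 - m_2\rangle
= -\int W\cdot \nabla h\,\dd m_2
\;-\; \int W\cdot\nabla\log m_2\,(h-1)\,\dd m_2.
\]

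For the first (entropic/Fisher) inequality, I would rewrite the first integral as $-\int W\cdot\nabla\log h\,\dd m_1$ (using $\nabla h\,\dd m_2 = \nabla\log h\,\dd m_1$) and apply Cauchy--Schwarz against $m_1$ to obtain $\lVert W\rVert_{L^\infty}\sqrt{I(m_1|m_2)}$, since $\int\lvert\nabla\log h\rvert^2\dd m_1 = I(m_1|m_2)$. The second integral equals $\int W\cdot\nabla\log m_2\,\dd(m_1-m_2)$, hence is controlled by $\lVert W\rVert_{L^\infty}\lVert\nabla\log m_2\rVert_{L^\infty}\lVert m_1-m_2\rVert_{L^1}$; Pinsker's inequality $\lVert m_1-m_2\rVert_{L^1}\leqslant \sqrt{2H(m_1|m_2)}$ then yields the claimed bound.

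For the second ($L^2$/Dirichlet) inequality, I would instead apply Cauchy--Schwarz directly against $m_2$. The first integral is bounded by
\[
\Bigl(\int\lvert W\rvert^2\dd m_2\Bigr)^{\!1/2}\Bigl(\int\lvert\nabla h\rvert^2\dd m_2\Bigr)^{\!1/2}
= \lVert W\rVert_{L^2(m_2)}\sqrt{E(m_1|m_2)},
\]
and the second by
\[
\Bigl(\int\lvert W\rvert^2\lvert\nabla\log m_2\rvert^2\dd m_2\Bigr)^{\!1/2}\Bigl(\int(h-1)^2\dd m_2\Bigr)^{\!1/2}
\leqslant \lVert\nabla\log m_2\rVert_{L^\infty}\lVert W\rVert_{L^2(m_2)}\sqrt{D(m_1|m_2)}.
\]
Summing the two bounds concludes the proof.

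I do not anticipate any genuine obstacle: the only care needed is that the integration by parts, the identity $\nabla h = h\nabla\log h$, and $\nabla m_2 = m_2\nabla\log m_2$ require $m_1$, $m_2$ to be sufficiently regular and positive, which is precisely the ``regular enough'' hypothesis in the statement and is met in our applications by the smooth approximations $m^{N,\varepsilon}_t$, $m^\varepsilon_t$ fixed in Section~\ref{sec:intro}.
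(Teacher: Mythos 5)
Your proof is correct and follows essentially the same route as the paper: integrate by parts onto $V$, split via $\nabla m_1-\nabla m_2=m_1\nabla\log(m_1/m_2)+(m_1-m_2)\nabla\log m_2$ (equivalently your $-V\cdot\nabla h\,m_2-V\cdot(h-1)\nabla\log m_2\,m_2$), then Cauchy--Schwarz against $m_1$ plus Pinsker for the entropic bound and Cauchy--Schwarz against $m_2$ for the $L^2$ bound. The only difference, your reduction to a scalar divergence by testing against unit vectors $\xi$, is a cosmetic bookkeeping step for the vector-valued $K$ that the paper handles implicitly by working with pointwise norms of $V$.
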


\begin{proof}
For the first inequality, we have
\begin{align*}
\MoveEqLeft \lvert \langle K, m_1 - m_2\rangle\rvert \\
&= \lvert \langle V, \nabla m_1 - \nabla m_2\rangle\rvert \\
&\leqslant \int_{\T^d} \lvert V\rvert
\biggl| \frac{\nabla m_1}{m_1} - \frac{\nabla m_2}{m_2}\biggr| \dd m_1
+ \int_{\T^d} \frac{\lvert \nabla m_2\rvert}{m_2}
\lvert V\rvert \dd \lvert m_1 - m_2 \rvert \\
&\leqslant \lVert V\rVert_{L^\infty}
\biggl(\int_{\T^d} \Bigl|\nabla \log \frac{m_1}{m_2}\Bigr|^2
\dd m_1\biggr)^{\!1/2}
+ \lVert \nabla \log m_2 \rVert_{L^\infty} \lVert V\rVert_{L^\infty}
\lVert m_1 - m_2 \rVert_{L^1} \\
&\leqslant \lVert V\rVert_{L^\infty}
\Bigl( \sqrt{I(m_1|m_2)}
+ \lVert \nabla\log m_2\rVert_{L^\infty} \sqrt{2H(m_1|m_2)} \Bigr).
\end{align*}
For the second inequality, we set $h = m_1 / m_2$ and find
\begin{align*}
\MoveEqLeft\lvert \langle K_1, m_1 - m_2 \rangle \rvert \\
&= \biggl|\int_{\T^d} K (h - 1) \dd m_2 \biggr| \\
&\leqslant \biggl|\int_{\T^d} V\nabla h \dd m_2\biggr|
+ \biggl|\int_{\T^d} V (h - 1) \nabla \log m_2 \dd m_2 \biggr|\\
&\leqslant
\lVert V\rVert_{L^2(m_2)}
\bigl( \lVert \nabla h \rVert_{L^2(m_2)}
+ \lVert \nabla \log m_t\rVert_{L^\infty}
\lVert h-1 \rVert_{L^2(m_2)} \bigr). \qedhere
\end{align*}
\end{proof}

\subsection{Improved Jabin--Wang lemma}
\label{sec:jw-lem}

We state here a refinement of \cite[Theorem~4]{JabinWang},
which yields the correct asymptotic behavior
when the ``test function'' (denoted there by $\phi$)
tends to zero. This refinement is not needed
for the global analysis in \cite{JabinWang},
but it is essential for bounding the inner interaction
in our local analysis.
For simplicity, we restrict to bounded $\phi$,
which already suffices in the torus setting.
Under the exponential moment condition of \cite{JabinWang},
similar estimates would follow.

We denote the universal constant from \cite{JabinWang} by
\[
\CJW \coloneqq 1600^2 + 36e^4.
\]
A sharper constant is available in
\textcite[Lemma~4.3]{LLNQuantitative}.

\begin{thm}[Alternative version of {\cite[Theorem 4]{JabinWang}}]
\label{thm:concentration}
Let $\phi \in L^\infty( \T^d \times \T^d; \R)$ and $m \in \mathcal P(\T^d)$
be such that $\int_{\T^d} \phi(x,y) m(\dd y)
= \int_{\T^d} \phi(y,x) m(\dd y) = 0$
and $\phi(x,x) = 0$ for all $x\in\T^d$.
Denote $\gamma = \CJW \lVert \phi \rVert_{L^\infty}^2$.
If $\gamma \in \bigl[0,\frac 12\bigr]$, then for all integer $k \geqslant 1$,
we have
\[
\log \int_{\T^{kd}} \exp \biggl( \frac{1}{k} \sum_{i,j \in [k]}
\phi(x^i, x^j) \biggr) m^{\otimes k}(\dd\x^{[k]}) \leqslant
6\gamma.
\]
\end{thm}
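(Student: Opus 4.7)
The plan is to adapt the argument of Jabin and Wang in \cite{JabinWang} by carefully tracking the dependence of the bound on $\lVert\phi\rVert_{L^\infty}$, so as to recover the correct linear behavior in $\gamma$ for small $\gamma$. Writing $S_\phi \coloneqq \sum_{i,j\in[k]} \phi(x^i,x^j)$, I would Taylor expand
\[
\int_{\T^{kd}} \exp\biggl(\frac{S_\phi}{k}\biggr) m^{\otimes k}(\dd\x^{[k]})
= \sum_{n=0}^\infty \frac{1}{n!\,k^n}
\int_{\T^{kd}} S_\phi^n \dd m^{\otimes k}.
\]
The $n=0$ term equals $1$, and the $n=1$ term vanishes thanks to the cumulant conditions $\int\phi(x,y)m(\dd y)=\int\phi(y,x)m(\dd x)=0$.

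For $n\geqslant 2$, expand $S_\phi^n$ into a sum of monomials $\prod_{r=1}^n \phi(x^{i_r},x^{j_r})$ indexed by sequences of $n$ pairs in $[k]^2$. The double-cumulant property forces the integral of a monomial against $m^{\otimes k}$ to vanish as soon as some particle index appears only once, so only those monomials whose index pattern uses every index at least twice contribute. Following the Jabin--Wang combinatorics (the quadratic case being exactly Proposition~\ref{prop:counting}), one then bounds the number of such ``non-degenerate'' patterns and the resulting integral by
\[
\Bigl|\int_{\T^{kd}} S_\phi^n \dd m^{\otimes k}\Bigr|
\leqslant k^n \alpha_n \lVert\phi\rVert_{L^\infty}^n
\]
for explicit combinatorial constants $\alpha_n$.

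The refinement beyond \cite[Theorem 4]{JabinWang} is to pull out the common prefactor $\lVert\phi\rVert_{L^\infty}^2\leqslant \gamma/\CJW$ from every $n\geqslant 2$ contribution and then to sum the residual series $\sum_{n\geqslant 2}\alpha_n\lVert\phi\rVert_{L^\infty}^{n-2}/n!$. Under the assumption $\gamma\leqslant 1/2$, the factorial denominator tames the growth of $\alpha_n$ and this residual sum is bounded by some universal constant $C$. One then obtains
\[
\int_{\T^{kd}} \exp(S_\phi/k) \dd m^{\otimes k} \leqslant 1 + C\gamma,
\]
and the inequality $\log(1+x)\leqslant x$ converts this into $\log\int\exp\leqslant C\gamma$; tightening the bookkeeping to $C\leqslant 6$ gives the stated inequality.

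The main obstacle is quantitative rather than conceptual: one must retain the correct constants throughout the Jabin--Wang hierarchy of combinatorial estimates (which in their original write-up splits into several regimes contributing $1600^2$ and $36e^4$ to $\CJW$) in order to verify the numerical bound $C\leqslant 6$. The threshold $\gamma\leqslant \tfrac12$ is precisely what makes the residual series geometric and summable, and the factor $\CJW$ in the definition of $\gamma$ absorbs the universal combinatorial constants from the $n=2$ level so that higher-order contributions are genuinely of smaller order.
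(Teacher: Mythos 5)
Your overall strategy is the same as the paper's: expand the exponential, use the double cumulant condition together with $\phi(x,x)=0$ to kill the linear term, control the higher moments by the Jabin--Wang pattern-counting, and sum a series that is geometric in $\gamma$ so that $\log(1+x)\leqslant x$ yields a bound linear in $\gamma$ under $\gamma\leqslant\tfrac12$. So conceptually you are on the right track, and the refinement you identify (keeping the prefactor $\lVert\phi\rVert_{L^\infty}^2$ rather than a constant) is indeed the point of the statement.

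However, there is a genuine gap in the execution, and it sits exactly where the stated constant $6$ comes from. You propose to bound every order $n\geqslant 2$, odd orders included, by $\bigl|\int S_\phi^n\dd m^{\otimes k}\bigr|\leqslant k^n\alpha_n\lVert\phi\rVert_{L^\infty}^n$ with ``explicit combinatorial constants $\alpha_n$'' that are never produced; but the estimates actually available from Jabin--Wang (Proposition~\ref{prop:counting}, i.e.\ their Propositions 4 and 5 --- note these already cover \emph{all} even moments, not just the quadratic case) only control $\frac1{(2r)!}\int\lvert k^{-1}S_\phi\rvert^{2r}\dd m^{\otimes k}$ by $(1600\lVert\phi\rVert_{L^\infty})^{2r}$ or $(6e^2\lVert\phi\rVert_{L^\infty})^{2r}$. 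There is no off-the-shelf odd-moment bound, and redoing the counting for odd $n$ with constants sharp enough is precisely the work you defer to ``bookkeeping''. The paper sidesteps this with the elementary inequality $e^a-a-1\leqslant 3\sum_{r\geqslant1}\lvert a\rvert^{2r}/(2r)!$, which absorbs all odd powers into even ones at the cost of a factor $3$; since $\gamma=\CJW\lVert\phi\rVert_{L^\infty}^2$ dominates the square of both regime constants, each even term is at most $\gamma^r$, the series sums to $3\gamma/(1-\gamma)$, and $\gamma\leqslant\tfrac12$ gives exactly $6\gamma$. Without such a reduction (or a genuinely new odd-moment estimate), the assertion that ``tightening the bookkeeping'' yields $C\leqslant 6$ with this specific $\CJW$ is unsupported. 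A further small correction: the factorial denominators do not ``tame the growth of $\alpha_n$'' --- even after dividing by $(2r)!$ the moment bounds remain geometric of the form $(C\lVert\phi\rVert_{L^\infty})^{2r}$, and it is solely the smallness hypothesis $\gamma\leqslant\tfrac12$ that makes the series summable and linear in $\gamma$.
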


The proof will depend on two combinatorical estimates in \cite{JabinWang},
which we state here for the readers' convenience.

\begin{prop}[{\cite[Propositions 4 and 5]{JabinWang}}]
\label{prop:counting}
Under the assumptions of Theorem~\ref{thm:concentration},
for all integer $r \geqslant 1$, we have
\[
\frac 1{(2r)!} \int_{\T^{kd}} \biggl| \frac 1k \sum_{i,j\in[k]}
\phi(x^i, x^j)\biggr|^{2r} m^{\otimes k}(\dd\x^{[k]})
\leqslant \begin{cases}
(6e^2 \lVert \phi \rVert_{L^\infty})^{2r} & \text{if $4r > k$,} \\
(1600 \lVert \phi \rVert_{L^\infty})^{2r} &
\text{if $4 \leqslant 4r \leqslant k$,}
\end{cases}
\]
\end{prop}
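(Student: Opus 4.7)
The plan is to expand the $2r$-th power into a sum over index tuples, use the cumulant hypothesis to discard ``under-connected'' configurations, and then estimate the number of surviving ones by a combinatorial count on labeled multigraphs; I would split the final count into the two regimes of the statement.

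First, since $\phi$ is real-valued, the integrand is just the $2r$-th power, which I would expand as
\[
\int_{\T^{kd}} \biggl( \frac 1k \sum_{i,j \in [k]} \phi(x^i,x^j) \biggr)^{\!2r}
m^{\otimes k}(\dd\x^{[k]})
= \frac{1}{k^{2r}} \sum_{(\mathbf i,\mathbf j)\in[k]^{4r}}
\int_{\T^{kd}} \prod_{\ell=1}^{2r} \phi(x^{i_\ell},x^{j_\ell})
\,m^{\otimes k}(\dd\x^{[k]}).
\]
Each tuple $(\mathbf i,\mathbf j)=(i_1,j_1,\ldots,i_{2r},j_{2r})$ defines a labeled directed multigraph $G$ whose vertex set $V \subseteq [k]$ is the set of values actually used and whose $2r$ ordered edges are $\ell \mapsto (i_\ell, j_\ell)$. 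The condition $\phi(x,x)=0$ rules out loops, and the cumulant condition, applied via Fubini to any vertex incident to exactly one edge, kills the contribution of that tuple. Hence only loopless configurations in which every vertex has degree at least two survive.

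Setting $s = |V|$, the handshake inequality $2s \leqslant \sum_v \deg v = 4r$ forces $s \leqslant 2r$. Bounding each surviving integrand in absolute value by $\lVert \phi \rVert_{L^\infty}^{2r}$, the task reduces to estimating
\[
\frac{1}{(2r)!} \sum_{s=1}^{2r} \frac{k^s}{k^{2r}} N(s,2r),
\]
where $N(s,2r)$ denotes the number of loopless edge-labeled multigraphs on the labeled vertex set $[s]$ with $2r$ edges and every vertex of degree $\geqslant 2$. I would bound $N(s,2r)$ by first partitioning the $4r$ half-edges into $s$ groups of size $\geqslant 2$ (one per vertex, with size equal to its degree), and then counting the loopless pairings of half-edges into the $2r$ labeled edges; both steps are amenable to Stirling-type estimates and exponential generating functions.

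Finally, I would split into cases. In the regime $4 \leqslant 4r \leqslant k$, we have $s \leqslant 2r \leqslant k/2$, so the factor $k^s/k^{2r} \leqslant 1$ is harmless and what remains is a purely combinatorial sum that I expect to evaluate to at most $(2r)!\,1600^{2r}$; the minimum-degree-two constraint is what keeps this sum finite and produces the explicit numerical constant. In the regime $4r > k$ the deficit $k^{2r-s}$ in the denominator can no longer be fully absorbed, and I would use the cruder bound $k \leqslant 4r$ together with the same combinatorial count, ultimately producing the constant $6e^2$. The main obstacle, and the source of the specific numerical constants, is the explicit and uniform combinatorial control of $N(s,2r)$ with the minimum-degree-two constraint tracked carefully through the partition-and-pairing enumeration; this is the delicate heart of the argument, already worked out in Propositions~4 and 5 of \cite{JabinWang}.
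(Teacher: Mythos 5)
The paper does not actually prove this proposition: it is imported verbatim from Jabin--Wang \cite{JabinWang} (their Propositions~4 and~5), stated ``for the readers' convenience,'' so deferring the decisive combinatorial estimate to that reference is exactly what the paper does, and your proposal is consistent with that treatment. Your sketch also reconstructs the strategy behind the cited result correctly: expand the $2r$-th power over index tuples, use $\phi(x,x)=0$ and the two cumulant conditions (via Fubini applied to any index of degree one) to kill all under-connected tuples, reduce to loopless multigraphs with all degrees $\geqslant 2$ and hence at most $2r$ vertices, and then count, with the choice of the vertex set contributing at most $k^s$ against the prefactor $k^{-2r}$. Two remarks. First, in the regime $4r > k$ the multigraph count is not even needed: the trivial bound $\bigl\lvert k^{-1}\sum_{i,j}\phi(x^i,x^j)\bigr\rvert \leqslant k\lVert\phi\rVert_{L^\infty}$ together with $(2r)!\geqslant (2r/e)^{2r}$ and $k<4r$ already gives $(2e\lVert\phi\rVert_{L^\infty})^{2r}$, which sits inside the stated constant $6e^2$. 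Second, in the regime $4\leqslant 4r\leqslant k$, your phrase ``I expect this to evaluate to at most $(2r)!\,1600^{2r}$'' is precisely the nontrivial content of the cited propositions: one must play the minimum-degree-two constraint against the deficit $k^{s-2r}$ and the factor $1/(2r)!$ uniformly in $r$ and $k$, and this is where the explicit constant comes from. As a standalone derivation your proposal therefore has a gap at that point; as a use of the literature it coincides with the paper's own handling of the statement.
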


\begin{proof}[Proof of Theorem~\ref{thm:concentration}]
Let $a \neq 0$. We have the elementary inequality
\begin{align*}
e^a - a - 1 &= \sum_{r=2}^{\infty} \frac{a^r}{r!}
\leqslant \sum_{r=2}^{\infty} \frac{\lvert a\rvert^r}{r!} \\
&= \sum_{r=1}^{\infty} \frac{\lvert a\rvert^{2r}}{(2r)!}
+ \sum_{r=1}^{\infty} \frac{\lvert a\rvert^{2r+1}}{(2r+1)!} \\
&\leqslant \sum_{r=1}^{\infty} \frac{\lvert a\rvert^{2r}}{(2r)!}
+ \sum_{r=1}^{\infty} \frac{\lvert a\rvert^{2r+1}}{2(2r+1)!}
\biggl( \frac{\lvert a\rvert}{2r+2} + \frac{2r+2}{\lvert a\rvert} \biggr) \\
&\leqslant 3 \sum_{r=1}^\infty \frac{\lvert a\rvert^{2r}}{(2r)!}.
\end{align*}
The inequality $e^a - a - 1 \leqslant 3 \sum_{r=1}^{\infty}
\frac{\lvert a\rvert^{2r}}{(2r)!}$ holds true for $a = 0$ as well.
Taking $a = \frac 1k \sum_{i,j\in[k]} \phi(x^i,x^j)$
in the inequality above and integrating with $m^{\otimes k}(\dd\x^{[k]})$,
we get
\begin{align*}
\MoveEqLeft \int_{\T^{kd}} \exp \biggl( \frac{1}{k} \sum_{i,j \in [k]}
\phi(x^i, x^j) \biggr) m^{\otimes k}(\dd\x^{[k]}) \\
&\leqslant 1 + \frac 1k\sum_{i,j\in[k]}
\int_{\T^{kd}} \phi(x^i,x^j) m^{\otimes k}(\dd\x^{[k]}) \\
&\mathrel{\hphantom{\leqslant}}\hphantom{1}
+ 3 \sum_{r=1}^\infty \frac{1}{(2r)!}
\int_{\T^{kd}} \biggl| \frac 1k \sum_{i,j\in[k]}
\phi(x^i, x^j)\biggr|^{2r} m^{\otimes k}(\dd\x^{[k]}).
\end{align*}
The second term on the right hand side vanishes,
as by assumption, for $i \neq j$,
we have $\int_{\T^{kd}} \phi(x^i,x^j)m^{\otimes k}(\dd\x^{[k]}) = 0$,
and for $i = j$, we have $\phi(x^i,x^i) = 0$.
Thus, using the counting result of Proposition~\ref{prop:counting}, we get
\begin{align*}
\MoveEqLeft \int_{\T^{kd}} \exp \biggl( \frac{1}{k} \sum_{i,j \in [k]}
\phi(x^i, x^j) \biggr) m^{\otimes k}(\dd\x^{[k]}) \\
&\leqslant 1 + 3\sum_{r=1}^{\lfloor k/4\rfloor}
(1600 \lVert \phi\rVert_{L^\infty})^{2r}
+ 3\sum_{r=\lfloor k/4\rfloor+1}^{\infty}
(6e^2\lVert \phi\rVert_{L^\infty})^{2r}
= 1 + \frac{3\gamma}{1 - \gamma}
\end{align*}
We conclude by noting that
$\log\bigl(1 + \frac{3\gamma}{1 - \gamma}\bigr)
\leqslant \frac{3\gamma}{1 - \gamma} \leqslant 6\gamma$
for $\gamma \in \bigl[0, \frac 12\bigr]$.
\end{proof}

Then, taking a rescaling of $\phi$, we get the following.

\begin{cor}
\label{cor:concentration}
Suppose that the function
$\phi \in L^\infty( \T^d \times \T^d; \R)$
and the measure $m \in \mathcal P(\R^d)$
satisfy $\int_{\T^d} \phi(x,y) m(\dd y)
= \int_{\T^d} \phi(y,x) m(\dd y) = 0$
and $\phi(x,x) = 0$ for all $x\in\T^d$.
Then, for all integer $N\geqslant 2$ and $k \in [N]$, we have
\[
\log \int_{\T^{kd}} \exp \biggl( \frac{1}{N} \sum_{i,j \in [k]}
\phi(x^i, x^j) \biggr) m^{\otimes k}(\dd\x^{[k]}) \leqslant
6\CJW\lVert\phi\rVert_{L^\infty}^2 \frac{k^2}{N^2},
\]
given that $\CJW\lVert\phi\rVert_{L^\infty}^2\leqslant 1/2$.
\end{cor}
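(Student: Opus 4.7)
The plan is a one-line rescaling argument: reduce the corollary directly to Theorem~\ref{thm:concentration} by applying it to a scaled test function. Concretely, I would introduce $\tilde\phi \coloneqq (k/N)\phi$ and observe that $\tilde\phi$ automatically inherits the cumulant conditions $\int_{\T^d} \tilde\phi(x,y) m(\dd y) = \int_{\T^d} \tilde\phi(y,x) m(\dd y) = 0$ and the diagonal vanishing $\tilde\phi(x,x) = 0$, since these are preserved under multiplication by a scalar.

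Next I would verify the smallness hypothesis of Theorem~\ref{thm:concentration} for $\tilde\phi$: since $k \leqslant N$, we have
\[
\CJW\lVert\tilde\phi\rVert_{L^\infty}^2
= \CJW \frac{k^2}{N^2} \lVert\phi\rVert_{L^\infty}^2
\leqslant \CJW\lVert\phi\rVert_{L^\infty}^2
\leqslant \tfrac{1}{2},
\]
so $\gamma \coloneqq \CJW\lVert\tilde\phi\rVert_{L^\infty}^2 \in [0,\tfrac 12]$ as required.

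Finally, applying Theorem~\ref{thm:concentration} to $\tilde\phi$ yields
\[
\log \int_{\T^{kd}} \exp\biggl( \frac{1}{k}\sum_{i,j\in[k]} \tilde\phi(x^i,x^j)\biggr) m^{\otimes k}(\dd\x^{[k]}) \leqslant 6\gamma,
\]
and since $\frac{1}{k}\tilde\phi = \frac{1}{N}\phi$ the left-hand side is exactly $\log \int \exp\bigl(\frac{1}{N}\sum_{i,j}\phi(x^i,x^j)\bigr) m^{\otimes k}$, while the right-hand side is $6\CJW\lVert\phi\rVert_{L^\infty}^2 k^2/N^2$. This is the stated bound, so there is no genuine obstacle — the only thing to be careful about is bookkeeping the two factors of $k/N$ (one from the scaling, one from squaring the $L^\infty$ norm) to match them with the prefactor $k^2/N^2$ in the conclusion.
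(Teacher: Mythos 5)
Your proposal is correct and is exactly the argument the paper intends: the corollary is stated as following from Theorem~\ref{thm:concentration} ``by taking a rescaling of $\phi$,'' and your choice $\tilde\phi = (k/N)\phi$, with the check $\CJW\lVert\tilde\phi\rVert_{L^\infty}^2 \leqslant \CJW\lVert\phi\rVert_{L^\infty}^2 \leqslant 1/2$ since $k \leqslant N$, is precisely that rescaling. The bookkeeping of the two factors of $k/N$ matches the stated prefactor $k^2/N^2$, so nothing is missing.
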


\subsection{Maximum principle}
\label{sec:maximum-principle}

We state a maximum principle for a system of ODEs. The result can be
proved by a standard contradiction argument, which we omit here and
leave to the reader.

\begin{prop}
\label{prop:maximum-principle}
Let $T > 0$ and
let $\x : [0,T] \to \R^N$ be a $\mathcal C^1$ continuous function.
Suppose that every component of the initial value $\x(0)$
is non-negative, i.e., $x^i(0) \geqslant 0$ for all $i \in [N]$.
Suppose that it satisfies
\[
\forall t \in [0,T],~\forall i \in [N],\qquad
\frac{\dd x^i(t)}{\dd t} \geqslant
\sum_{j \in [N]} A^i_j(t) x^j (t)
\]
for some continuous matrix-valued $A : [0,T] \to \R^{d \times d}$
whose off-diagonal elements are non-negative, i.e.,
$A^i_j (t) \geqslant 0$ for all $i$, $j \in [N]$ such that $i \neq j$.
Then, for all $t \in [0,T]$ and all $i \in [N]$,
we have $x^i(t) \geqslant 0$.
\end{prop}

\sloppy
\printbibliography
\medskip
\myauthorinfo

\end{document}